\setlist[itemize]{noitemsep, nolistsep}
\setlist[enumerate]{noitemsep}
\begin{document}

\newcommand\sect{\section}

\newtheorem{thm}{Theorem}[section]
\newtheorem{cor}[thm]{Corollary}
\newtheorem{lem}[thm]{Lemma}
\newtheorem{prop}[thm]{Proposition}
\newtheorem{propconstr}[thm]{Proposition-Construction}
\newtheorem{fact}[thm]{Fact}

\theoremstyle{definition}
\newtheorem{para}[thm]{}
\newtheorem{ax}[thm]{Axiom}
\newtheorem{conj}[thm]{Conjecture}
\newtheorem{defn}[thm]{Definition}
\newtheorem{notation}[thm]{Notation}
\newtheorem{rem}[thm]{Remark}
\newtheorem{remark}[thm]{Remark}
\newtheorem{question}[thm]{Question}
\newtheorem{example}[thm]{Example}
\newtheorem{problem}[thm]{Problem}
\newtheorem{exercise}[thm]{Exercise}
\newtheorem{ex}[thm]{Exercise}

\newcommand{\red}[1]{\textcolor{red}{#1}}
\newcommand{\blue}[1]{\textcolor{blue}{#1}}
\newcommand{\green}[1]{\textcolor{green}{#1}}
\newcommand{\orange}[1]{\textcolor{orange}{#1}}
\newcommand{\si}{\sigma}
\newcommand{\prf}{\smallskip\noindent{\it        Proof}. }

\newcommand{\la}{\langle}
\newcommand{\ra}{\rangle}
\newcommand{\inv}{^{-1}}
\newcommand{\trdeg}{{\rm tr.deg}}

\newcommand{\rest}{{\lower       .25     em      \hbox{$\vert$}}}
\newcommand{\ch}{{\rm    char}}
\newcommand{\zee}{{\mathbb  Z}}
\newcommand{\conc}{^\frown}
\newcommand{\acl}{{\rm acl}}
\newcommand{\dcl}{{\rm dcl}}
\newcommand{\cl}{{\rm cl}}

\newcommand{\mult}{{\rm  Mult}}

\newcommand{\aut}{{\rm   Aut}}
\newcommand{\Aut}{{\rm Aut}}

\newcommand{\cala}{{\mathcal A}}
\newcommand{\calb}{{\mathcal B}}
\newcommand{\calc}{{\mathcal C}}
\newcommand{\cald}{{\mathcal D}}
\newcommand{\cale}{{\mathcal E}}
\newcommand{\calf}{{\mathcal F}}
\newcommand{\calg}{{\mathcal G}}
\newcommand{\calh}{{\mathcal H}}
\newcommand{\cali}{{\mathcal I}}
\newcommand{\calj}{{\mathcal J}}
\newcommand{\calk}{{\mathcal K}}
\newcommand{\call}{{\mathcal L}}
\newcommand{\calm}{{\mathcal M}}
\newcommand{\caln}{{\mathcal N}}
\newcommand{\calo}{{\mathcal O}}
\newcommand{\calp}{{\mathcal P}}
\newcommand{\calq}{{\mathcal Q}}
\newcommand{\calr}{{\mathcal R}}
\newcommand{\cals}{{\mathcal	S}}
\newcommand{\calt}{{\mathcal T}}
\newcommand{\calu}{{\mathcal U}}
\newcommand{\Uu}{{\mathcal U}}
\newcommand{\calv}{{\mathcal V}}
\newcommand{\calw}{{\mathcal W}}
\newcommand{\calx}{{\mathcal X}}
\newcommand{\caly}{{\mathcal Y}}
\newcommand{\calz}{{\mathcal Z}}

\newcommand{\aff}{{\mathbb A}}

\newcommand{\rat}{{\mathbb Q}}
\newcommand{\ga}{{\mathbb G}_a}
\newcommand{\gm}{{\mathbb G}_m}
\newcommand{\cee}{{\mathbb C}}
\newcommand{\nat}{{\mathbb  N}}
\newcommand{\ree}{{\mathbb R}}

\newcommand{\frob}{{\rm Frob}}
\newcommand{\Frob}{{\rm Frob}}
\newcommand{\fix}{{\rm Fix}}
\newcommand{\Fix}{{\rm Fix}}
\newcommand{\proj}{{\mathbb P}}
\newcommand{\sym}{{\rm Sym}}
\newcommand{\gal}{{\mathrm{Gal}}}
\newcommand{\Gal}{{\mathrm Gal}}
\newcommand{\loc}{{\rm locus}}
\newcommand{\Kk}{{\mathbb K}}

\newcommand{\Ker}{{\rm Ker}\,}
\newcommand{\cb}{{\rm Cb}}
\newcommand{\bigudot}{\hbox{$\bigcup\kern-.75em\cdot\;\,$}}

\newcommand{\sime}{{\hbox{$\kern-.19em\sim$}}}

\newcommand{\acb}{\overline{\rm Cb}}
\newcommand{\dnfo}{\,\raise.2em\hbox{$\,\mathrel|\kern-.9em\lower.35em\hbox{$\smile$}
$}}
\newcommand{\dfo}{\;\raise.2em\hbox{$\mathrel|\kern-.9em\lower.35em\hbox{$\smile$}
\kern-.7em\hbox{\char'57}$}\;}

\newcommand{\Hom}{{\rm Hom}}


\newcommand\vlabel{\label}

\title{Measures on  perfect $e$-free PAC fields}
\author{Zo\'e Chatzidakis\thanks{partially supported by the ANR AGRUME
    (ANR-17-CE40-0026) and by the ANR 
GeoMod  (ANR-DFG, AAPG2019)}{ (CNRS  - ENS)} 
  \and
  {Nicholas Ramsey}{(UCLA)}}
\maketitle
 \begin{abstract}
We construct measures on definable sets in $e$-free perfect PAC fields,
as well as on perfect PAC fields whose absolute Galois groups
are free pro-$p$ of finite rank.  We deduce the definable amenability of all groups definable in such fields.  As a corollary, we additionally prove the definable amenability of all groups definable in perfect $\omega$-free PAC fields via ultralimit measures.  
 \end{abstract}

\section{Introduction}

In this paper, we construct measures on definable sets in perfect
pseudo-algebraically closed fields with free absolute Galois group.  A
field $K$ is called \emph{pseudo-algebraically closed}, or \emph{PAC},
if every absolutely irreducible variety defined over $K$ has a
$K$-rational point.  The class of PAC fields was isolated by Ax (\cite{A}) in
the course of algebraically characterizing pseudo-finite fields; he
showed that the class of pseudo-finite fields coincides with the class
of perfect PAC fields with free absolute Galois group of rank 1.
In the early 80's, PAC
fields were given systematic study by Cherlin, van den Dries, and
Macintyre (\cite{CDM81}) and separately by Ershov (\cite{E}).  Since then, they have been a
central object of study in both model theory and field arithmetic.   

Our work was motivated by two results and one
question. In \cite{CDM}, van den Dries, Macintyre, and the first 
author showed how to definably associate a measure to each
definable set in a pseudo-finite field. This measure is a {\em
  non-standard counting measure}. Hrushovski shows in \cite{H} that this
counting measure is the
only finitely additive probability measure  on a pseudo-finite field which satisfies Fubini. This result was
later extended by Halupczok (\cite{IH}) who replaced the Fubini
condition by preservation under definable bijections and some
invariance condition,  and showed, in essence, that in order for a perfect PAC field to have
a probability measure on definable sets satisfying these conditions, the absolute
Galois group of the field had to be procyclic. So, in a PAC field with
non-procyclic absolute Galois group, it is natural to ask if there is still a way to construct a useful measure on
definable sets, which necessarily has to satisfy weaker
conditions. It is always possible to construct \emph{some}
  measures on definable sets from types, which determine
  $\{0,1\}$-valued measures, but these are usually less relevant for the
  kinds of applications for which measures have been used. A test
  question for our measures  {originally came} from the
  problem of determining if all groups in simple theories are definably
  amenable. 
  Recall that a
definable group $G$ is \emph{definably amenable} if there is a
left-translation invariant finitely additive probability measure on
definable subsets of $G$.  It is known that all stable groups are
definably amenable.   Within the class of simple theories it was only
recently discovered that there are non-definably amenable groups with
simple theories (\cite{CHKKMPR}).  Since groups definable in bounded
perfect PAC fields form one of the most interesting classes 
of groups with simple theories, it is natural to ask if they are all
definably amenable.  

In the particular case of a perfect $e$-free PAC field $k$ (i.e., with
absolute Galois group isomorphic to $\hat F_e$, the free profinite group
on $e$ generators), we are able to define a measure on definable
subsets of (absolutely irreducible) varieties, which coincides with the
non-standard counting measure when $e=1$. Our work uses in an essential
way the measure introduced by Jarden and Kiehne (\cite{JK}). We build on their results by using their measure on sentences to define measures on definable sets.

We also give a description of how
the measure changes under definable maps, and  show
that it is preserved under birational automorphisms of a variety.  This
latter result applies to show that if $G$ is an algebraic group defined
over $k$, then
$G(k)$ is definably amenable. Using results of Hrushovski and Pillay
\cite{HP1} showing that all groups definable in an $e$-free PAC field are
virtually isogenous to algebraic groups, we are then able to deduce the
definable amenability of all groups definable in perfect $e$-free PAC
fields. 

Our measures are constructed explicitly for definable subsets of perfect
PAC fields with free absolute Galois group of \emph{finite} rank, but this has
useful consequences for perfect $\omega$-free PAC fields as well.  Recall that a
PAC field $K$ is called $\omega$-free if it is elementarily equivalent
to a PAC field with absolute Galois group $\hat{F_{\omega}}$.  Every $\omega$-free
PAC field is elementarily equivalent to a non-principal ultraproduct of $e$-free PAC
fields ($e\in\nat$).  Thus,
from the definable amenability of all groups definable in $e$-free PAC
fields, we obtain that groups definable in $\omega$-free PAC fields are
  definably amenable,  via the ultralimit measure.  On the algebraic
  group itself, the measure is a $0-1$ measure, so not very interesting
  in itself. It raises however interesting questions on the behaviour of
  the measure on
  non-algebraic definable groups.
As $\omega$-free PAC
fields are a core example of NSOP$_{1}$ theories, for which a
satisfactory theory of groups is not yet available, we hope that these
results will play a useful role in classifying groups definable in $\omega$-free PAC
fields.  

We are also able to generalize our results to the case of perfect
  PAC field with free pro-$p$ absolute Galois group.  This involves a similar construction, but working over the fixed field of a fixed $p$-Sylow subgroup.  {As a corollary, we obtain definable amenability for groups definable in these fields as well. 
  
This paper is organized as follows.  Section 2 establishes several
preliminary facts about definable sets and types in PAC fields, as well
as a description of the aforementioned measure of Jarden and Kiehne.
The main construction of measures on definable subsets of a perfect
$e$-free PAC field takes place in Section 3, and we prove
  there that all groups definable in perfect $e$-free PAC fields are definably amenable.  In Section 4, we describe extensions to other fields, namely, the perfect $\omega$-free PAC fields and the perfect PAC fields with free pro-$p$ absolute Galois group.  We conclude this section with some questions about the behaviour of these measures and possible extensions.  
\section{Preliminaries}

\begin{defn} Let $e\geq 1$ be an integer. A field $K$ is
{\em $e$-free PAC} if it
is  PAC, and has absolute Galois group $\gal(K)$ isomorphic to
$\hat F_e$ (the profinite completion of the free group on $e$
generators).\\ 
Let $\call=\{+,-,\cdot,0,1\}$ be the language of rings, $E$ a field, and
$\call(E)$ the language $\call$ augmented by constant symbols for the
elements of $E$. A {\em test sentence\footnote{Also called a
  {\em one-variable statement} in \cite{Jw}.} over $E$} is a Boolean
combination of $\call(E)$-sentences of the form $\exists y\, f(y)=0$,
where $f(y)\in E[y]$ ($y$ a single variable). Note that in particular any quantifier-free
sentence of $\call(E)$ is a test sentence. \\
Similarly, we say that $\theta(x)$ is a {\em test formula
  formula over $E$} (in the tuple of variables $x$)  if it is a Boolean combination of
$\call(E)$-formulas of the form $\exists y\, f(x,y)=0$, where $y$ is a
single variable, and $f\in E[x,y]$. 

\end{defn}

\begin{notation} If $E$ is a field, then $E^{alg}$ denotes an algebraic closure of $E$, and $E^s$ its separable
closure. We will view the absolute Galois group $\gal(E)$ as acting on
$E^{alg}$. ${\rm Var}_k$ denotes the set of (absolutely irreducible)
varieties defined over $k$. 
\end{notation}

\begin{fact}\vlabel{fact1} (Corollary 20.4.2 and Lemma 20.6.4 in \cite{FJ}) Let $K$ and $L$ be
  perfect $e$-free PAC fields, and $E$ a common subfield. Then
  $$K\equiv_EL \iff E^s\cap K\simeq_E E^s\cap L,$$
if and only if they satisfy the same test sentences over $E$.   
  \end{fact}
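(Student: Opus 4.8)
The plan is to establish the cycle of implications $(1)\Rightarrow(3)\Rightarrow(2)\Rightarrow(1)$, writing (1) for $K\equiv_E L$, (2) for $E^s\cap K\simeq_E E^s\cap L$, and (3) for ``$K$ and $L$ satisfy the same test sentences over $E$''. The implication $(1)\Rightarrow(3)$ is immediate, since test sentences are $\call(E)$-sentences.

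For $(3)\Rightarrow(2)$, the point is that for $f\in E[y]$ the test sentence $\exists y\,f(y)=0$ holds in a perfect field $M\supseteq E$ if and only if $E^s\cap M$ contains a root of the separable part of $f$: writing $f(y)=g(y^{p^m})$ with $g$ separable (and $m=0$ in characteristic $0$), a root $y_0\in M$ of $f$ has $y_0^{p^m}\in E^s\cap M$ a root of $g$, and conversely perfectness of $M$ lets us extract $p^m$-th roots. Hence (3) says precisely that, for every finite separable extension $F=E(\alpha)$ of $E$ inside $E^s$, the field $E^s\cap K$ contains a conjugate of $\alpha$ iff $E^s\cap L$ does. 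Translating via Galois theory, with $H_M=\gal(E^s/(E^s\cap M))$ a closed subgroup of $G=\gal(E)$, this means: $H_K$ is contained in a conjugate of the open subgroup $\gal(E^s/F)$ iff $H_L$ is, and this holds for every open subgroup of $G$. One concludes that $H_K$ and $H_L$ are conjugate in $G$ — equivalently $E^s\cap K\simeq_E E^s\cap L$ — by a finite-quotient-plus-compactness argument: in each finite quotient $G/N$, letting the image of the open subgroup run over all subgroups of $G/N$ shows that $\bar H_K$ is subconjugate to $\bar H_L$ and vice versa, forcing $|\bar H_K|=|\bar H_L|$ and hence $\bar H_K\sim\bar H_L$; the sets of elements of $G/N$ conjugating $\bar H_K$ onto $\bar H_L$ pull back to a directed family of nonempty closed subsets of $G$, whose intersection provides the conjugating element.

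The substantial implication is $(2)\Rightarrow(1)$, which is an instance of the elementary-equivalence criterion for PAC fields. Given an $E$-isomorphism $E^s\cap K\simeq_E E^s\cap L$ (equivalently $\tilde E\cap K\simeq_E\tilde E\cap L$, by passing to perfect closures), one goes to $\kappa$-saturated, strongly $\kappa$-homogeneous elementary extensions $K^*\succ K$ and $L^*\succ L$ — again perfect $e$-free PAC, with unchanged algebraic part over $E$ — and builds an $E$-isomorphism $K^*\to L^*$ by back-and-forth. The invariant to maintain is that the partial isomorphism between small subfields extends to an isomorphism of their algebraic closures carrying the ``$K^*$-part'' onto the ``$L^*$-part''; the algebraic step of the back-and-forth uses (2), while the transcendental step uses the PAC property to find a rational point on the relevant variety together with projectivity (freeness) of $\gal(K^*)=\gal(L^*)=\hat F_e$ to solve the finite embedding problem that controls the Galois behaviour of the new generator. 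Alternatively, one may invoke directly the Galois-stratification elimination of Fried--Jarden, by which the truth in a perfect $e$-free PAC field of any $\call(E)$-sentence is reduced effectively to a statement about $E^s\cap K$ together with the fixed data of $\hat F_e$.

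I expect $(2)\Rightarrow(1)$ to be the main obstacle, and within it the delicate point is the interaction of the two pieces of data: the isomorphism of algebraic parts must be made compatible with the (here automatic) isomorphism of absolute Galois groups, i.e.\ one must check that the surjections $\hat F_e=\gal(K)\twoheadrightarrow H_K$ and $\hat F_e=\gal(L)\twoheadrightarrow H_L$ can be matched, after composing with an automorphism of $\hat F_e$, with a chosen isomorphism $H_K\simeq H_L$, so that the two bodies of information glue into a single elementary-equivalence datum. This is exactly where finiteness of $e$ and freeness of $\hat F_e$ enter, and it is the content carried by \cite{FJ}, Corollary 20.4.2 and Lemma 20.6.4.
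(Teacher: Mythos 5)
The paper gives no proof of this Fact beyond the citation to Fried--Jarden, and your proposal follows essentially the same route: (1)$\Rightarrow$(3) is immediate, your reduction of test sentences to roots of separable parts together with the subconjugacy-plus-compactness argument is a correct proof of (3)$\Rightarrow$(2) (in substance Lemma 20.6.4 of \cite{FJ}), and for the substantive direction (2)$\Rightarrow$(1) you sketch and then defer to exactly the embedding-lemma/elementary-equivalence machinery of \cite{FJ} (Corollary 20.4.2) that the paper itself invokes. Your closing paragraph correctly identifies the one point your sketch glosses over: matching the restriction epimorphisms uses the embedding property of $\hat F_e$ (via Gasch\"utz/Iwasawa), not projectivity alone.
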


An immediate application gives a description of types:
\begin{cor}\vlabel{types} Let $E$ be a subfield of an $e$-free perfect
  PAC field $\calk$, and let $a$, $b$, be tuples in $\calk$. The
  following conditions are equivalent:
  \begin{enumerate}
  \item $tp(a/E)=tp(b/E)$;
    \item There is an $E$-isomorphism $E(a)^s\cap \calk\to E(b)^s\cap
      \calk$ which sends $a$ to $b$;
    \item $a$ and $b$ satisfy the same test-formulas over $E$;
   \item for every finite Galois extension $L$ of $E(a)$, there is a
     field embedding $\varphi:L\to E(b)^s$ such that $\varphi(L\cap
     \calk)=\varphi(L)\cap \calk$.   
 \end{enumerate}
 \end{cor}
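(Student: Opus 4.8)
The plan is to deduce all four equivalences from Fact~\ref{fact1}, applied not over $E$ but over the subfield $E(a)$ (equivalently, over $E(b)$). Throughout, set $M=E(a)^s\cap\calk$ and $N=E(b)^s\cap\calk$. Each of $(1)$, $(2)$, $(3)$ forces, through quantifier-free data, the existence of a (necessarily unique) $E$-isomorphism $\sigma_0\colon E(a)\to E(b)$ with $\sigma_0(a)=b$; if no such $\sigma_0$ exists then all four conditions fail, so assume it is given. Using $\sigma_0$, regard $\calk$ as an $\call(E(a))$-structure in two ways: as $\calk_a$, in which $E(a)$ is the literal subfield, and as $\calk_b$, in which $E(a)$ is embedded into $\calk$ via $\sigma_0$ followed by the inclusion $E(b)\subseteq\calk$. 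Both $\calk_a$ and $\calk_b$ are perfect $e$-free PAC fields with common subfield $E(a)$, and a routine translation (using that $E(a)$ is generated over $E$ by $a$) shows that $\calk_a\equiv_{E(a)}\calk_b$ if and only if $tp(a/E)=tp(b/E)$, which is $(1)$.

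With this in hand, $(1)\Leftrightarrow(2)$ and $(1)\Leftrightarrow(3)$ become the two characterisations in Fact~\ref{fact1} applied to $\calk_a,\calk_b$ over $E(a)$, once the identifications are unwound. For $(1)\Leftrightarrow(2)$: one has $E(a)^s\cap\calk_a=M$ and $E(a)^s\cap\calk_b=N$ (the $E(a)$-structure on $N$ being carried by $\sigma_0$), and ``$\simeq_{E(a)}$'' says precisely that there is an isomorphism $M\to N$ extending $\sigma_0$, i.e.\ sending $a$ to $b$. For $(1)\Leftrightarrow(3)$: a test sentence over $E(a)$ is a Boolean combination of formulas $\exists y\,f(y)=0$ with $f\in E(a)[y]$; writing the coefficients of $f$ as rational functions in $a$ over $E$ and clearing the (nonzero) denominators rewrites such a sentence as $\Theta(a)$ for some test formula $\Theta(x)$ over $E$, while applying $\sigma_0$ rewrites the matching sentence over $E(b)$ as $\Theta(b)$; conversely, substituting $a$ (resp.\ $b$) into any test formula over $E$ already produces a test sentence over $E(a)$ (resp.\ $E(b)$). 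Hence $\calk_a$ and $\calk_b$ satisfy the same test sentences over $E(a)$ if and only if $a$ and $b$ satisfy the same test formulas over $E$.

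It remains to prove $(2)\Leftrightarrow(4)$, where $\varphi$ is understood to extend $\sigma_0$, so that $\varphi(a)=b$. Fix one extension $\rho_0\colon E(a)^s\to E(b)^s$ of $\sigma_0$; every embedding $E(a)^s\to E(b)^s$ extending $\sigma_0$ has the form $\rho_0 g$ with $g\in\gal(E(a))$, so --- as $M$ need not be normal over $E(a)$ --- condition $(2)$ is equivalent to the existence of $g\in\gal(E(a))$ with $g(M)=\rho_0^{-1}(N)=:M'$. Similarly, for a finite Galois extension $L$ of $E(a)$ inside $E(a)^s$, using $L\cap\calk=L\cap M$ and the fact that $L$ is Galois, one checks that the existence of an embedding $\varphi\colon L\to E(b)^s$ extending $\sigma_0$ with $\varphi(L\cap\calk)=\varphi(L)\cap\calk$ is equivalent to the existence of $g\in\gal(E(a))$ with $L\cap g(M)=L\cap M'$. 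Put $S_L=\{g\in\gal(E(a)) : L\cap g(M)=L\cap M'\}$. This membership condition depends only on the coset of $g$ modulo the open normal subgroup $\Aut(E(a)^s/L)$, so $S_L$ is clopen; it is nonempty by $(4)$; and $S_{L_1L_2}\subseteq S_{L_1}\cap S_{L_2}$, so the family $\{S_L\}$ has the finite intersection property. By compactness of $\gal(E(a))$ there is a $g$ lying in every $S_L$, and since $E(a)^s=\bigcup_L L$ this forces $g(M)=M'$, which gives $(2)$. The converse $(2)\Rightarrow(4)$ is immediate: extend the isomorphism $M\to N$ of $(2)$ to an embedding $E(a)^s\to E(b)^s$ and restrict it to $L$.

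The only genuine obstacle is the direction $(4)\Rightarrow(2)$: the finite approximations supplied by $(4)$ are not, a priori, restrictions of one fixed embedding --- precisely because $E(a)^s\cap\calk$ may fail to be normal over $E(a)$, so the images of $M$ under different extensions of $\sigma_0$ really do differ --- and they must be glued by the profinite compactness argument above. The points there needing care are that each $S_L$ is clopen and that the sets $S_L$ form a downward directed family under compositum; everything else is bookkeeping around $\sigma_0$ and its extensions.
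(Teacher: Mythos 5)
Your proof is correct and follows the route the paper intends: the corollary is stated there without proof as an ``immediate application'' of Fact~\ref{fact1}, applied over the common base $E(a)$ identified with $E(b)$ via $a\mapsto b$, which is exactly your argument for the equivalence of (1), (2) and (3). Your profinite compactness gluing for (4)$\Rightarrow$(2) (clopen, directed sets $S_L$ in $\gal(E(a))$) correctly supplies the only step the paper leaves implicit, and it is the standard argument for passing from finite-level data to an isomorphism of the relative separable closures.
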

  \begin{rem}\vlabel{theta} Given  a finite Galois extension $L$ of $E(a)$, and  a
subfield $K$ of $L$ containing $E(a)$, we will build a  test-formula 
 $\theta_K(x)$ over $E$ as follows: 
 for
each subfield $F$ of $L$ containing $K$, select a generator $\alpha_F$
over $E(a)$, and let $P_F\in E[x][y]$ be such that $P_F(a,y)$ is a 
minimal polynomial of $\alpha_F$ over
$E(a)$; then the formula $\theta_K(x)$ says the following:
$$\exists y\, P_K(x,y)=0 \land \bigwedge_{F\neq K} \forall y\,
P_F(x,y)\neq 0 \land Q(x)\neq 0,$$
where $Q(x)$ is the product of all leading coefficients in $y$ of the
polynomials $P_F(x,y)$. This formula has the following property:
whenever $\calk$ contains $E(a)$, then
$$\calk\models \theta_K(a)\iff \calk\cap L\simeq K.$$
    \end{rem}


\para{\bf The measure of Jarden and Kiehne}. \vlabel{JK} Let $K$ be a countable
Hilbertian field, let $e\geq
1$, and consider the Haar measure on $\gal(K)^e$. View $\gal(K)$ as acting on
$K^{alg}$. For each $\call(K)$-sentence $\theta$, they define
$$\mu(\theta)=\mu(\{\bar\si\in
\gal(K)^e\mid \Fix(\bar\si)\models \theta\}).$$
Then, for almost all $\bar\si\in \gal(K)^e$, $\Fix(\bar\si)$ is $e$-free
PAC. Again, by the elementary invariants of $e$-free PAC fields, for
every $\call(K)$-sentence $\theta$, there is a finite Galois extension
$L$ of $K$, and finitely many subextensions $M_1,\ldots,M_r$ of $L$ such
that for all $\bar\si\in \gal(K)^e$, $$\Fix(\bar\si)\models \theta \iff
\bigvee_i\Fix(\bar\si)\cap L\simeq_{K}M_i.$$  Hence, assuming that if $i\neq j$, then $M_i\not\simeq_K M_j$,
we have 
$$\mu(\theta)=\sum_i|\{\bar\si\in
\gal(L/K)^e\mid \Fix(\bar\si)\simeq_{K}M_i\}|\,[L:K]^{-e}.$$

\begin{fact} \vlabel{Gaschutz} (An ingredient in the proof of Gasch\"utz
  Lemma --   Lemma 17.7.1 of \cite{FJ}.) Let $f:B\to A$ be an
epimorphism of finite groups, with $B$ $e$-generated. If $\bar h\in
A^e$ generates $A$, then $\{\bar g\in B^e\mid f(\bar g)=\bar h,\, \langle
\bar g \rangle=B\}$ is non-empty, and its size does not depend on $\bar
h$.  
\end{fact}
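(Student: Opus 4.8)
The plan is to deduce both assertions from a single Möbius‑inversion identity over the subgroup lattice of $B$, together with a global counting argument for non‑emptiness. Fix a generating $e$-tuple $\bar h\in A^e$, and for each subgroup $H\le B$ set
$$a(H)=\#\{\bar g\in H^e: f(\bar g)=\bar h\},\qquad b(H)=\#\{\bar g\in H^e: f(\bar g)=\bar h,\ \langle\bar g\rangle=H\}.$$
Partitioning the tuples counted by $a(H)$ according to the subgroup they generate gives $a(H)=\sum_{K\le H}b(K)$ for every $H\le B$ (note that $\langle\bar g\rangle=K$ forces $\bar g\in K^e$, so $b(K)$ really depends only on $K$ and $\bar h$). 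Hence, by Möbius inversion in the subgroup lattice of $B$,
$$\#\{\bar g\in B^e:f(\bar g)=\bar h,\ \langle\bar g\rangle=B\}=b(B)=\sum_{K\le B}\mu(K,B)\,a(K),$$
where $\mu$ denotes the Möbius function of that lattice.

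The next step is to evaluate $a(K)$. The restriction $f\colon K\to f(K)$ is a surjection with kernel $K\cap\ker f$, so every element of $f(K)^e$ has exactly $|K\cap\ker f|^e$ preimages in $K^e$, while elements of $A^e\setminus f(K)^e$ have none. Since $\bar h$ generates $A$, we have $\bar h\in f(K)^e$ if and only if $f(K)=A$, i.e.\ if and only if $K\ker f=B$. Therefore $a(K)=|K\cap\ker f|^e$ when $K\ker f=B$ and $a(K)=0$ otherwise, so
$$\#\{\bar g\in B^e:f(\bar g)=\bar h,\ \langle\bar g\rangle=B\}=\sum_{\substack{K\le B\\ K\ker f=B}}\mu(K,B)\,|K\cap\ker f|^e.$$
The right‑hand side depends only on the group‑theoretic data of $f\colon B\to A$ and not on $\bar h$, which proves that the size of the set in question is independent of the choice of generating tuple $\bar h$.

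For non‑emptiness, sum this identity over all generating $e$-tuples of $A$. Writing $U=\{\bar g\in B^e:\langle\bar g\rangle=B\}$ and $T=\{\bar h'\in A^e:\langle\bar h'\rangle=A\}$, every $\bar g\in U$ satisfies $f(\bar g)\in T$, since a surjective image of a generating set generates; thus $U$ is the disjoint union over $\bar h'\in T$ of the sets $\{\bar g\in U:f(\bar g)=\bar h'\}$. By the independence just established, $|U|=|T|\cdot m$, where $m$ is the common value of $\#\{\bar g\in B^e:f(\bar g)=\bar h',\ \langle\bar g\rangle=B\}$. Because $B$ is $e$-generated we have $|U|\ge 1$, hence $m\ge 1$; in particular the set attached to our original $\bar h$ is non‑empty.

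The one point requiring care is the evaluation of $a(K)$: the whole argument rests on the observation that $a(K)$ depends on $\bar h$ only through the fact that $\bar h$ generates $A$, so that the condition "$\bar h\in f(K)^e$" becomes the intrinsic condition "$K\ker f=B$"; the rest is the standard incidence‑algebra formalism. One could instead argue by induction on $|\ker f|$, reducing to the case in which $\ker f$ is a minimal normal subgroup of $B$ and then splitting according to whether $\ker f$ is contained in the Frattini subgroup of $B$, but the Möbius‑inversion argument is cleaner and avoids that case analysis.
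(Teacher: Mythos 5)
Your proof is correct. The paper does not prove this statement itself --- it quotes Lemma 17.7.1 of Fried--Jarden --- so the natural comparison is with that source. The classical argument there runs the same stratified count as an induction on the group order: every lift of $\bar h$ to $B^e$ (there are $|\ker f|^e$ of them, since $\bar h$ generates $A$ each lift generates a subgroup $K$ with $K\ker f=B$), the number of lifts generating a proper such $K$ is independent of $\bar h$ by the induction hypothesis applied to the epimorphism $K\to A$, and hence the count for $K=B$ is independent of $\bar h$ by subtraction; non-emptiness is then obtained directly by lifting: choose a generating $e$-tuple $\bar b$ of $B$ (it exists because $B$ is $e$-generated), note that $f(\bar b)$ generates $A$, and compare the fibres over $\bar h$ and over $f(\bar b)$. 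Your M\"obius inversion over the subgroup lattice is a closed-form repackaging of exactly this induction, resting on the same key computation $a(K)=|K\cap\ker f|^e$ if $K\ker f=B$ and $a(K)=0$ otherwise; what it buys is an explicit formula $\sum_{K\ker f=B}\mu(K,B)\,|K\cap\ker f|^e$ for the common value, at the cost of invoking the incidence-algebra formalism rather than a bare induction. Your non-emptiness step, summing the identity over all generating tuples of $A$ to get $|U|=|T|\cdot m$ and using $|U|\geq 1$, is slightly more roundabout than the direct lift of a generating tuple of $B$, but it is complete and correct.
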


\begin{notation}  Let $K\leq E\leq F$, with $F$ and $E$ finite Galois over $K$,
and $\gal(F/K)$ $e$-generated. We denote by $\phi(E,K)$, or $\phi(E)$,
the number of $\bar\si\in \gal(E/K)^e$ which generate $\gal(E/K)$. This
only depends on $e$ and the group $\gal(E/K)$. We denote by $\phi(F/E,K)$
or $\phi(F/E)$ the number given by Gasch\"utz Lemma, i.e.: given a set
$\bar\si\in \gal(E/K)^e$ generating $Gal(E/K)$, the number of lifts
$\bar\tau\in \gal(F/K)^e$ of $\bar\si$ which generate $\gal(F/K)$. Note
that $\phi(F)=\phi(F/E)\phi(E)$.
\end{notation}

\section{Definition of the measure when $k$ is $e$-free PAC}

\para{\bf Setting and notation}. We let $e\in\nat$, $e\geq 1$,  $k$ be a countable perfect
field with $\gal(k)\simeq \hat F_e$, and $V \in {\rm Var}_{k}$. We
will define a measure $\mu_V$ on definable subsets of $V$. We fix a
generic $a$ of $V$ over $k$. If $S\subset V$ is
definable and not
Zariski dense in $V$, then we set $\mu_V(S)=0$. \\ 
If $L$ is a finite Galois extension of $k(a)$, we let $k_L=k^{alg}\cap
L$. \\ 
We set $(\si_1,\ldots,\si_e)\cdot
(\tau_1,\ldots,\tau_e)=(\si_1\tau_1,\ldots,\si_e\tau_e)$. 

\para{\bf Definition of the measure}. 
Using the description of types (\ref{types}), it follows
that for every $k$-formula $\varphi(x)$, there is a finite Galois
extension $L$ of $k(a)$ such that whether $a$ satisfies $\varphi$ or not
in some $e$-free perfect PAC regular extension $\calk$ of $k$, 
only depends on the $k(a)$-isomorphism type of $\calk\cap L$. Let $L$ be finite Galois over $k(a)$, and let 
$K_1,\ldots,K_r$ be the subfields of $L$ containing $k(a)$, which are
regular over $k$ and with $\gal(L/K_i)$ an image of $G(k)$. We take them up to
conjugation over $k(a)$. For each $i$, consider the 
 $\call(k)$-formula $\theta_i(x)=\theta_{K_i}(x)$, see \ref{theta}. It
defines a subset of $V(k)$. 

Since these events are mutually exclusive on the generics of $V$,
it suffices to assign a measure to each $\theta_i$, and  compute
$\mu_V(\varphi)$ as the sum of the appropriate
$\mu_V(\theta_i)$. Indeed, they will be mutually exclusive on a
  Zariski open subset of $V(k)$, the complement of which will have
  measure $0$. See also Proposition 20.6.6 in \cite{FJ}. Letting $\mu$ denote the measure of Jarden and Kiehne
(see \ref{JK}), we
define

$$\mu_V(\theta_i)=\frac{\mu(\theta_i(a))}{\sum_{j}\mu(\theta_j(a))}.$$
Note that
\begin{align*}\sum_{j}\mu(\theta_j(a))&=\mu(\{\bar\si\in \gal(k(a))^e\mid
  \Fix(\bar\si)\cap k^s=k\})\\
  &=|\{\bar\si\in \gal(L/k(a))^e\mid
\Fix(\bar\si)\cap k_L=k\}|[L:k(a)]^{-e},\end{align*}
i.e., those $\bar\si$ whose fixed field (within $L$) is regular over
$k$.
We do this for every finite Galois extension of $k(a)$. Another way of
expressing $\mu_V(\theta_i)$ as defined above is simply as
$$\mu_V(\theta_i)=\frac{|\{\bar\si\in \gal(L/k(a))^e\mid \Fix(\bar\si)\simeq_{k(a)}K_i\}|}{|\{\bar\si\in \gal(L/k(a))^e\mid
\Fix(\bar\si)\cap k_L=k\}|}.$$




\begin{prop} The measure $\mu_V$ is well-defined.
\end{prop}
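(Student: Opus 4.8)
The claim is that the number $\mu_V(\varphi)$ produced by the recipe above is independent of all the choices involved — the resolving finite Galois extension $L/k(a)$, the representative $K_i$ of each $\gal(L/k(a))$-conjugacy class among the admissible subfields of $L$ (those $K$ with $k(a)\subseteq K\subseteq L$, $K$ regular over $k$, $\gal(L/K)$ an image of $\gal(k)$), and the primitive elements and polynomials $P_F$ entering $\theta_{K_i}$ (see \ref{theta}) — so that $\mu_V$ descends to a function on definable subsets of $V$. Independence of the chosen generic $a$ of $V$ is immediate, since two generics differ by a $k$-isomorphism of function fields carrying the whole construction along.

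Everything but the choice of $L$ I would dispose of first. By the count recalled in \ref{JK}, $\mu(\theta_{K_i}(a))$ is the normalized Haar measure of $\{\bar\si\in\gal(k(a))^e\mid\Fix(\bar\si)\cap L\simeq_{k(a)}K_i\}$ (equivalently $|\{\bar\si\in\gal(L/k(a))^e\mid\Fix(\bar\si)\simeq_{k(a)}K_i\}|\,[L:k(a)]^{-e}$), which depends only on the conjugacy class of $K_i$, not on its presentation. Summing over all admissible $K_j$ identifies $\sum_j\mu(\theta_j(a))$ with the Haar measure of the event ``$\Fix(\bar\si)\cap L$ is regular over $k$'': for any $\bar\si$ the field $\Fix(\bar\si)\cap L$ contains $k(a)$ and $\gal(L/(\Fix(\bar\si)\cap L))=\langle\bar\si|_L\rangle$ is an $e$-generated, hence admissible, image of $\gal(k)$, so $\Fix(\bar\si)\cap L$ is $\simeq_{k(a)}$ one of the $K_j$ exactly when it is regular over $k$ — which also reconciles the two displayed expressions for $\mu_V(\theta_i)$. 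This measure is at least $[L:k(a)]^{-e}>0$: since $k(a)$ is regular over $k$ we have $k(a)\cap k_L=k$, so $\gal(L/k(a))\to\gal(k_L/k)$ is onto, and $\gal(k_L/k)$ being $e$-generated, some $\bar\si\in\gal(L/k(a))^e$ restricts to a generating tuple of $\gal(k_L/k)$, whereupon $\Fix(\bar\si)\cap k_L=k$. So the denominator never vanishes.

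The heart of the matter is independence of $L$. Given two resolving extensions, I would first pass to the Galois closure $M$ over $k(a)$ of their compositum: $M$ still resolves $\varphi$, because any $k(a)$-embedding of $\calk\cap M$ into $M$ extends to an automorphism of $M$, which stabilizes each intermediate Galois $L/k(a)$ and hence shows that the $k(a)$-isomorphism type of $\calk\cap M$ determines that of $\calk\cap L=(\calk\cap M)\cap L$. This reduces us to $L'\subseteq L$, both resolving $\varphi$. Write $\mu$ for the Jarden--Kiehne Haar measure on $\gal(k(a))^e$, set $D_L=\{\bar\si\mid\Fix(\bar\si)\cap L\text{ regular over }k\}$, and let $A_L\subseteq D_L$ be the subset on which moreover the $k(a)$-type of $\Fix(\bar\si)\cap L$ is one of those forcing $\varphi$ (well-defined as $L$ resolves $\varphi$); define $D_{L'},A_{L'}$ the same way. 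By the previous paragraph $\mu_V(\varphi)=\mu(A_L)/\mu(D_L)$ when computed from $L$ and $=\mu(A_{L'})/\mu(D_{L'})$ from $L'$, so I must show these agree. Two inputs are needed. The first is that ``$[K]$ forces $\varphi$ through $L$'' is equivalent to ``$[K\cap L']$ forces $\varphi$ through $L'$'' for each admissible $K\subseteq L$: both are equivalent to ``$\calk\models\varphi(a)$'' for an $e$-free PAC regular extension $\calk$ of $k$ with $\calk\cap L=K$, such a $\calk$ existing inside a regular $\hat F_e$-PAC extension of $K$ (standard; cf.\ \cite{FJ}) and then satisfying $\calk\cap L'=K\cap L'$. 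It follows that $A_L=A_{L'}\cap D_L$. The second input is a conditional independence: one checks the identity $L'\cap k(a)k_L=k(a)k_{L'}$ (any subfield of $k(a)k_L=k_L(a)$ containing $k(a)$ is $F(a)$ with $k\subseteq F\subseteq k_L$, and $F\subseteq L'$ forces $F\subseteq k^{alg}\cap L'=k_{L'}$), so that $\gal(L'k_L(a)/k(a))\cong\gal(L'/k(a))\times_{\gal(k_{L'}/k)}\gal(k_L/k)$, and therefore, after pushing $\mu$ forward to this finite group, the restrictions $\bar\si|_{L'}$ and $\bar\si|_{k_L}$ are independent given their common restriction $\bar\si|_{k_{L'}}$.

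The hard part is assembling these. Here $A_{L'}$ is an event in $\bar\si|_{L'}$, $D_{L'}$ an event in $\bar\si|_{k_{L'}}$, and $D_L$ an event in $\bar\si|_{k_L}$, with $A_{L'}\subseteq D_{L'}$ and $D_L\subseteq D_{L'}$; Gasch\"utz's Lemma \ref{Gaschutz}, applied to the epimorphism $\gal(k_L/k)\twoheadrightarrow\gal(k_{L'}/k)$ (note $\gal(k_L/k)$ is $e$-generated), makes the conditional $\mu$-probability of $D_L$, given that $\bar\si|_{k_{L'}}$ takes any fixed generating value, a constant $\gamma$ independent of that value. Conditioning on $\bar\si|_{k_{L'}}$ and using the conditional independence then gives $\mu(A_{L'}\cap D_L)=\gamma\,\mu(A_{L'})$ and $\mu(D_L)=\gamma\,\mu(D_{L'})$, hence $\mu(A_L)/\mu(D_L)=\mu(A_{L'}\cap D_L)/\mu(D_L)=\mu(A_{L'})/\mu(D_{L'})$, as wanted. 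The conceptual obstacle is exactly this separation of the ``vertical'' information $\bar\si|_{L'}$, which decides $\varphi$, from the ``horizontal'' algebraic information $\bar\si|_{k_L}$, which decides regularity over $k$: the linear-disjointness identity $L'\cap k(a)k_L=k(a)k_{L'}$ is what forces the two to interact only through $k_{L'}$, and Gasch\"utz's Lemma is what upgrades this to an equality of ratios. Finally, if $\varphi$ and $\psi$ define the same subset of $V$ they have the same forcing types for any common resolving $L$, so $\mu_V$ indeed descends to definable subsets of $V$.
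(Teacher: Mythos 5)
Correct, and in substance this is the paper's own argument: the paper likewise reduces to a pair of nested resolving Galois extensions and uses Gasch\"utz's Lemma together with the fibered-product (linear-disjointness) decomposition of the Galois group to show that numerator and denominator are multiplied by the same constant $\phi(k_L/k_{L'},k)\,[k_L:k_{L'}]^{-e}$, your conditional-independence phrasing being a probabilistic repackaging of its direct count of lifts. The only extra input you invoke without proof is the existence, for each admissible $K$, of an $e$-free perfect PAC field $\calk$ regular over $k$ with $\calk\cap L=K$ (used to transfer ``forcing'' between $L$ and $L'$); this is indeed standard and is implicitly relied on by the paper's own definition of $\mu_V$, so it is not a gap.
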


\prf We need to show that the measure does not depend on the choice of
the Galois extension $L$. 
I.e., that if $M$ contains $L$, and we do the counting in $\gal(M/k(a))$ 
to compute $\mu_V(\theta_i)$, we obtain the same number. This will
follow from the following Claim, which we will prove
below:\\[0.05in]
{\bf Claim}. {\em Let $\bar\si\in \gal(L/k(a))^e$, and suppose that $\Fix(\bar\si)$ is
regular over $k$. Then the number of $\bar\tau\in \gal(M/k(a))$
which restrict to $\bar\si$ and with $\Fix(\bar\tau)$ regular over $k$,
equals $\phi(k_M/k_L,k)[M:k_ML]^e$, and therefore does not depend on
$\bar\si$.}\\[0.05in] 
Indeed, let $M'_1,\ldots,M'_s$ be  a list (up to conjugation over $k(a)$) of all regular extensions of $k$ which are
contained in $M$, and contain $k(a)$, and $\gal(M/M'_j)$
$e$-generated.  Then given $K_i$,
if $\theta'_j$ denotes the formula expressing that the relative
algebraic closure of $k(a)$ inside $M$ is isomorphic to $M'_j$, then one
has, for $\bar\tau\in \gal(M/k(a))^e$ and $\bar\si$ its restriction to
$L$: $\Fix(\bar\tau)\cap L\simeq_{k(a)}\Fix(\bar\si)$ if and only if
$\Fix(\bar\tau)\simeq _{k(a)}M'_j$ and $M'_j\cap L\simeq
\Fix(\bar\si)$. I.e., computed in $M$, we will have 
$$\mu_V(\theta_i)=\frac{\sum_{j\in J_i}|\{\bar\tau\in \gal(M/k(a))^e\mid
  \Fix(\bar\tau)\simeq_{k(a)}M'_j\}|}{\sum_{1\leq j\leq r}|\{\bar\tau\in
  \gal(M/k(a)^e\mid \Fix(\bar\tau)\simeq_{k(a)}M'_j\}|},$$
where $J_i=\{j\mid M'_j\cap L\simeq K_i\}$. But by the claim, the number
on the right hand side equals 
$$\frac{|\{\bar\si\in
    \gal(L/k(a))^e\mid \Fix(\bar\si)\cap L\simeq_{k(a)}K_i\}|\phi(k_M/k_L,k)[M:k_ML]^e}{|\{\bar\si\in
    \gal(L/k(a))^e\mid \Fix(\bar\si)\cap
    k_L=k\}|\phi(k_M/k_L,k)[M:k_ML]^e}=\frac{\mu(\theta_i(a))}{\sum_j\mu(\theta_j(a))},$$
which equals $\mu_V(\theta_i)$ computed in $L$. It therefore suffices to prove the claim. \\[0.05in]
\noindent
{\em Proof of the claim}. Consider $\bar\si\in \gal(L/k(a))^e$, which projects onto a set of generators of
$\gal(k_L/k)$. Then the following two sets have
the same cardinality $\phi(k_M/k_L,k)$:
\begin{center}$\{\bar \tau\in \gal(k_M/k)^e\mid \bar\tau\rest_{k_L}=\bar\si\rest_{k_L}, \langle
\bar\tau\rangle =\gal(k_M/k)\}$\\
$\{\bar \tau\in \gal(k_ML/k(a))^e\mid \bar\tau\rest_L=\bar\si, \langle
\bar \tau\rest_{k_M}\rangle=\gal(k_M/k)\}$. \end{center}
Hence
$$|\{\bar\tau\in \gal(M/k(a))^e\mid \bar\tau\rest_L=\bar\si,
\langle\bar\tau\rest_{k_M}\rangle =\gal(k_M/k)\}|=\phi(k_M/k_L,k)[M:k_ML]^e.$$
\qed

\para {\bf Another way of counting}. Let $\bar\si_0\in \gal(k(a))^e$ be a
lift of an $e$-tuple generating $\gal(k)$. Then the set
$\{\bar\si_0\cdot\bar\tau\mid \bar\tau\in \gal(k^s(a))^e\}$ coincides with
the set $\{\bar\si\in \gal(k(a))^e\mid \fix(\bar\si)\cap k^s=k, \  
\bar\si\supset \bar\si_0\rest_{k^s}\}$. We fix a finite Galois extension $L$ of
$k(a)$ such that $\gal(L/k(a))$ splits, i.e.,
$$\gal(L/k(a))\simeq \gal(L/k_L(a))\rtimes \gal(k_L/k)$$
and without loss of generality, the restriction of $\bar\si_0$ to $L$
belongs to  $ \gal(k_L/k)\leq \gal(L/k(a))$. (The fact that $\gal(k(a))\simeq
\gal(k^s(a))\rtimes \gal(k)$ is well known: since
$\gal(k)$ is projective,  the restriction map $\gal(k(a))\to \gal(k)$
splits.) 

Working in $\gal(k^s(a))$, we denote the Jarden-Kiehne measure by $\bar \mu$. Then
the fields $K_i$ are contained in $\Fix(\bar\si_0)$. If $\theta'_i$ is
the $\call(k^s(a))$-sentence expressing that the relative algebraic
closure of $k^s(a)$ in $k^sL$ is isomorphic over $k^s(a)$ to $k^sK_i$, then
$\theta'_i$ is a sentence of $\call(k_L(a))$, and  we
 define
$$\bar\mu_V(\theta_i)=\bar\mu(\theta'_i)=|\{\bar\si\in
\gal(L/k_L(a))^e\mid \fix(\bar\si)\cap L\simeq k_LK_i\}|[L:k_L(a)]^{-e}.$$

\begin{prop} \vlabel{counting2} Assumptions and notation as above. Then $\bar\mu_V=\mu_V$. \end{prop}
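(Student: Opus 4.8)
The plan is to check $\bar\mu_V(\theta_i)=\mu_V(\theta_i)$ for each test-formula $\theta_i$ separately, since both measures extend to an arbitrary $\varphi$ by summing the values on the relevant $\theta_i$'s. Fix $i$ and write $G=\gal(L/k(a))$, $N=\gal(L/k_L(a))\trianglelefteq G$, $H=\gal(k_L/k)$, with $q\colon G\to H$ the restriction map (so $N=\ker q$), and $G_i=\gal(L/K_i)$. By the choice of $L$ we have a splitting $G=N\rtimes H$ with $\bar h_0:=\bar\si_0\rest_L\in H^e$ a generating tuple of $H$. First I would unwind $\bar\mu_V(\theta_i)=\bar\mu(\theta_i')$: the coset $\{\bar\si_0\cdot\bar\tau\mid\bar\tau\in\gal(k^s(a))^e\}$ restricts to $L$, via the group homomorphism $\gal(k(a))\to G$, onto $\{\bar h_0\cdot\bar\tau'\mid\bar\tau'\in N^e\}$, the fibre of $G^e\to H^e$ over $\bar h_0$, with $\bar\tau'=\bar\tau\rest_L$ running \emph{uniformly} over $N^e$ (the map $\gal(k^s(a))\to N$, $\gamma\mapsto\gamma\rest_L$, is a continuous surjection, hence sends Haar measure to Haar measure); and, by the defining property of $\theta_i=\theta_{K_i}$ (Remark \ref{theta}), whether $\theta_i(a)$ holds in $\Fix(\bar\si_0\bar\tau)$ depends only on $\Fix(\bar\si_0\bar\tau)\cap L=L^{\langle\bar h_0\bar\tau'\rangle}$. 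Hence $\bar\mu_V(\theta_i)=|\{\bar\si\in q^{-1}(\bar h_0)\mid L^{\langle\bar\si\rangle}\simeq_{k(a)}K_i\}|\,/\,|N|^e$, writing $q^{-1}(\bar h_0)$ for that fibre. On the $\mu_V$ side, $L^{\langle\bar\si\rangle}\simeq_{k(a)}K_i$ iff $\langle\bar\si\rangle$ is $G$-conjugate to $G_i$ (a $k(a)$-embedding of a subfield of $L$ extends to an automorphism of $L$); and since $K_i$ is regular over $k$ the restriction $G_i\to H$ is onto, so any $\bar\si$ with $L^{\langle\bar\si\rangle}\simeq_{k(a)}K_i$ has $q(\bar\si)$ generating $H$. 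Using the second displayed expression for $\mu_V(\theta_i)$, and that its denominator equals $\phi(k_L,k)\,|N|^e$ (each fibre of $G^e\to H^e$ over a generating tuple has $|N|^e=[L:k_L(a)]^e$ elements, and $\phi(k_L,k)$ counts the generating $e$-tuples of $H$), the proposition reduces to
\[
|\{\bar\si\in G^e\mid L^{\langle\bar\si\rangle}\simeq_{k(a)}K_i\}|=\phi(k_L,k)\cdot|\{\bar\si\in q^{-1}(\bar h_0)\mid L^{\langle\bar\si\rangle}\simeq_{k(a)}K_i\}|.
\]

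This in turn follows from the one genuinely substantial point: \emph{for every generating tuple $\bar h\in H^e$, the number $c(\bar h):=|\{\bar\si\in q^{-1}(\bar h)\mid L^{\langle\bar\si\rangle}\simeq_{k(a)}K_i\}|$ is independent of $\bar h$} — for then the left-hand side above equals $\sum_{\bar h}c(\bar h)=\phi(k_L,k)\,c(\bar h_0)$, the sum being over the generating tuples of $H$, which (by the remark just made) are the only ones contributing.

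I would prove this claim by Gasch\"utz's lemma, exactly as in the Claim in the proof of well-definedness. Decompose the set counted by $c(\bar h)$ according to the actual subgroup $\langle\bar\si\rangle$; this runs over the $G$-conjugates $G'$ of $G_i$. Each such $G'$ is $e$-generated (it is isomorphic to $G_i$, which is an image of $\gal(k)\simeq\hat F_e$) and satisfies $q(G')=H$, so $q\rest_{G'}\colon G'\twoheadrightarrow H$ is an epimorphism of finite groups with $e$-generated source; moreover, conjugation by an element of $G$ carrying $G_i$ to $G'$ is an isomorphism from $(G_i,q\rest_{G_i})$ onto $(G',q\rest_{G'})$, since $N=\ker q$ is normal in $G$. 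Hence Fact \ref{Gaschutz} applies and shows that $|\{\bar\si\in (G')^e\mid\langle\bar\si\rangle=G',\ q(\bar\si)=\bar h\}|$ depends neither on the generating tuple $\bar h$ nor on the chosen conjugate $G'$; summing over the $G'$'s gives $c(\bar h)$ independent of $\bar h$, as required. (The hypothesis of Fact \ref{Gaschutz} that $\bar h$ generate $H$ is automatic here, because $q(\langle\bar\si\rangle)=\langle q(\bar\si)\rangle$ is all of $H$ whenever $\langle\bar\si\rangle$ surjects onto $H$.)

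Granting the claim, the last displayed identity holds; dividing it by the common factor $\phi(k_L,k)\,|N|^e$ gives $\mu_V(\theta_i)=\bar\mu_V(\theta_i)$, and summing over $i$ yields $\mu_V=\bar\mu_V$. I expect the whole difficulty to be concentrated in the Gasch\"utz step — chiefly, recognizing that the conjugates $G'$ of $G_i$ together with the maps $q\rest_{G'}$ form a single isomorphism class of epimorphisms, so that the lemma applies uniformly to all of them at once; the remainder is the bookkeeping already carried out for well-definedness, together with the identification of the coset $\bar\si_0\gal(k^s(a))^e$ with the fibre $q^{-1}(\bar h_0)$ and the translation of $\bar\mu(\theta_i')$ into the count over that fibre.
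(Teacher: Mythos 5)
Your central counting step is right, and it usefully makes explicit something the paper leaves implicit. The paper passes from the count over the single fibre $q^{-1}(\bar h_0)$ to the full numerator of $\mu_V(\theta_i)$ by multiplying by $\phi(k_L)$, tacitly using that the fibre count is the same over every generating tuple of $\gal(k_L/k)$; in the paper this uniformity comes for free from the fact that the fibre count is identified with $[L:k_L(a)]^e\bar\mu(\theta'_i)$, a quantity independent of the choice of $\bar\si_0$, whereas you prove it purely group-theoretically, decomposing each fibre count over the $G$-conjugates $G'$ of $G_i$ and applying Fact \ref{Gaschutz} to $q\rest_{G'}\colon G'\twoheadrightarrow H$. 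That argument is correct and self-contained (one small imprecision: conjugation by $g$ intertwines $q\rest_{G_i}$ and $q\rest_{G'}$ only up to conjugation by $q(g)$ in $H$, so it is not literally an isomorphism of epimorphisms; this is harmless, since Fact \ref{Gaschutz} already gives independence of the generating tuple for each fixed $G'$, which is all the sum requires). Your computation of the denominator as $\phi(k_L,k)[L:k_L(a)]^e$ and the observation that only fibres over generating tuples contribute also agree with the paper.

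The gap is at the very first step, the unwinding of $\bar\mu_V(\theta_i)$. By definition $\bar\mu_V(\theta_i)=\bar\mu(\theta'_i)$, where $\theta'_i$ is a sentence over $k^s(a)$ (equivalently $k_L(a)$): concretely, $\bar\mu(\theta'_i)$ is the normalized count of $\bar\tau'\in\gal(L/k_L(a))^e$ whose fixed field in $L$ is isomorphic \emph{over $k_L(a)$ to $k_LK_i$}. You replace this by the count of $\bar\si$ in the fibre $q^{-1}(\bar h_0)$ whose fixed field in $L$ is isomorphic \emph{over $k(a)$ to $K_i$}, and you justify the replacement only by Remark \ref{theta} (satisfaction of $\theta_i$ depends only on the intersection with $L$). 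That remark does not bridge the two conditions: under the bijection $\bar\tau'\mapsto\bar h_0\bar\tau'$ one has to relate the $k_L(a)$-isomorphism type of the fixed field of $\bar\tau'$ (compared with $k_LK_i$) to the $k(a)$-isomorphism type of the fixed field of $\bar h_0\bar\tau'$ (compared with $K_i$), and this is not a formal matter — for instance $\langle\bar h_0\bar\tau'\rangle\cap N$ need not equal $\langle\bar\tau'\rangle$, and distinct, non-conjugate $K_i$, $K_j$ can have $k_LK_i=k_LK_j$, so the two conditions do not correspond element by element in any obvious way. This is precisely the step on which the paper concentrates its effort (the linear disjointness of the $K_i$ from $k^s(a)$ over $k(a)$, together with the normalization that the $K_i$ are contained in $\Fix(\bar\si_0)$, yielding the equivalence $\Fix(\bar\si_0\cdot\bar\tau)\models\theta_i\iff\Fix(\bar\tau)\models\theta'_i$), and it is absent from your write-up. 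As it stands, you have shown that the conditional measure, on the coset $\bar\si_0\cdot\gal(k^s(a))^e$, of the event $\Fix(\bar\si_0\bar\tau)\models\theta_i$ equals $\mu_V(\theta_i)$; to prove the proposition you must still identify that conditional measure with $\bar\mu(\theta'_i)$ as actually defined.
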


\prf 
We have
$$|\{\bar \si\in \gal(L/k(a))^e\mid \Fix(\bar\si)\cap
k_L=k\}|=\phi(k_L)[L:k_L(a)]^e.$$
Note  that the
fields $K_i$ are linearly disjoint from $k^s(a)$ over $k(a)$. Hence, if
$K'$ is a regular extension of $k$ containing $k(a)$ and contained in
$L$, then $K'\simeq_{k(a)}K_i$ if and only if $k_{L}K'\simeq
_{k_L(a)}k_{L}K_i$. It follows that if $\bar\tau\in \gal(k^s(a))^e$, then \\
$$\fix(\bar\si_0\cdot \bar\tau)\cap L\simeq_{k(a)}K_i \iff
\fix(\bar\tau)\cap L\simeq_{k_L(a)}k_LK_i,$$
i.e.,
$$\Fix(\bar\si_0\cdot\bar\tau)\models \theta_i\iff \Fix(\bar\tau)\models \theta'_i.$$


\noindent
We then have $$|\{\bar\si\in \gal(L/k(a))^e\mid
\bar\si\rest_{k_L}=\bar\si_0\rest_{k_L},\ \Fix(\bar\si)\simeq_{k(a)}K_i\}|=[L:k_L(a)]^e\bar\mu(\theta'_i).$$
Hence, $\mu(\theta_i)=\phi(k_L)\bar\mu(\theta'_i)[L:k_L(a)]^e$, which gives $\mu_V(\theta_i)=\bar\mu_V(\theta_i)$.

\para Is there another way of counting, say via homomorphisms $\gal(k)\to
\gal(L/k(a))$, which induce the identity on $\gal(k_L/k)$? Recall that
$\gal(k^s(a))\simeq \gal(k^s(a))\rtimes \gal(k)$, and let $L$ be a finite Galois
extension of $k(a)$ such that $\gal(L/k(a))=\gal(L/k_L(a))\rtimes
\gal(k_L/k)$. Let $k^L$ be a Galois extension of $k$ such that any
homomorphism of $\gal(k)$ into $\gal(L/k(a))$, which induces the identity on
$\gal(k_L/k)$, factors through $\gal(k^L/k)$. Note that $k^L\supset k_L$. So, writing $\gal(k^LL/k(a))$ as
$\gal(k^LL/k^L(a))\rtimes \gal(k^L/k)$, we count the number of homomorphisms
$\gal(k^L/k)\to \gal(k^LL/k(a))$ which induce the identity on
$\gal(k^L/k)$. And among those, we count those whose fixed field is
isomorphic (over $k(a)$) to $K_i$. Here again $K_i$ runs over all
subextensions of $L/k(a)$ which are regular over $k$ and with
$\gal(L/K_i)$ a homomorphic image of $\gal(k)$. Given an $e$-tuple
$\bar\si_0$ generating $\gal(k^L/k)$, we are reduced to the previous
computation, since a homomorphism $\rho:\gal(k)\to \gal(k^LL/k(a))$ is uniquely
determined by $\rho(\bar\si_0)$, and $\rho(\bar\si_0)$ will be written
as $(\bar\si_0,\bar\tau)$ for some $\bar\tau\in \gal(L/k_L(a))^e$.


\subsection{Change of measure under definable maps}

\begin{lem}\vlabel{lem2} If $e>1$, then  the measure $\mu_V$ ($V\in {\rm Var}_k$) defined above is not
preserved under definable bijection. 
\end{lem}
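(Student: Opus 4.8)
The plan is to produce, for any $k$ with $\ch k\neq 2$, a single $k$-definable bijection $S\to T$ with $\mu(S)\neq\mu(T)$ once $e>1$, using an isogeny to perform the kind of ``folding'' that a measure-preserving map cannot. Fix an elliptic curve $E$ over $k$ with $E[2](k)=\{O\}$; such an $E$ exists because $\gal(k)\simeq\hat F_e$ has $C_3$ as a quotient, so $k$ carries an irreducible cubic whose splitting field has Galois group $C_3$ (or $S_3$), and $E\colon y^2=(\text{that cubic})$ then has $k(E[2])$ equal to that splitting field, whence $E[2](k)=\{O\}$. Take $V=W=E$, $S=E(\calk)$ (so $\mu_E(S)=1$), $T=2E(\calk)=\{P:\exists Q\,([2]Q=P)\}$, and for $f$ the multiplication‑by‑$2$ morphism $[2]\colon E\to E$, defined over $k$. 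Since $\calk$ is regular over $k$ we have $E[2](\calk)=E[2](k^{alg})\cap\calk=E[2](k)=\{O\}$, so $[2]$ is injective on $E(\calk)$ and restricts to a $k$-definable bijection $S=E(\calk)\to T=2E(\calk)$. Everything then reduces to showing $\mu_E(2E(\calk))<1$ for $e>1$.

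To compute $\mu_E(2E(\calk))$ I would run the construction of $\mu_E$ on the formula $\varphi(x)\equiv\exists y\,([2]y=x)$. Let $a$ be generic in $E$ over $k$ and let $L$ be the Galois closure over $k(a)$ of $k(a)(Q_0)$, where $Q_0\in[2]^{-1}(a)$. Then $k(a)(Q_0)$ is the function field of the degree‑$4$ cover $[2]\colon E\to E$, hence $\simeq_k k(E)$ — in particular regular over $k$ and of degree $4$ over $k(a)$ — while $L=k(a)(Q_0,E[2])$ has $\gal(L/k(a))\simeq E[2]\rtimes\Gamma$ with $\Gamma=\gal(k(E[2])/k)\le\mathrm{GL}(E[2])\cong S_3$, and $k_L=k(E[2])\supsetneq k$. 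The crux of the middle step is: because the complement $\Gamma$ is a maximal subgroup of $E[2]\rtimes\Gamma$ ($E[2]$ being $\Gamma$-irreducible) and because $L$ is not regular over $k$ whereas $\calk\cap L$ always is, one gets $\calk\models\varphi(a)\iff\calk\cap L\simeq_{k(a)}k(a)(Q_0)$ — i.e. among the regular-over-$k$ subfields of $L$ the formula $\varphi$ ``sees'' only the single isomorphism class $k(a)(Q_0)$. Feeding $\theta=\theta_{k(a)(Q_0)}$ into
$$\mu_E(\theta)=\frac{\bigl|\{\bar\si\in\gal(L/k(a))^e:\ \Fix(\bar\si)\simeq_{k(a)}k(a)(Q_0)\}\bigr|}{\bigl|\{\bar\si\in\gal(L/k(a))^e:\ \Fix(\bar\si)\cap k_L=k\}\bigr|}$$
and using that $\Gamma$ is self-normalizing in $E[2]\rtimes\Gamma$ (so it has exactly $|E[2]|$ conjugates, as $E[2](k)=\{O\}$) together with the Gasch\"utz-type counting of Fact \ref{Gaschutz}: the numerator equals $|E[2]|\cdot\phi$ and the denominator $|E[2]|^{e}\cdot\phi$, where $\phi$ is the number of $e$-tuples generating $\Gamma$. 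The $\phi$ cancels, and $\mu_E(2E(\calk))=|E[2]|^{1-e}=4^{1-e}$.

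Since $4^{1-e}=1$ only for $e=1$, for $e>1$ the $k$-definable bijection $[2]\colon E(\calk)\to 2E(\calk)$ carries a set of measure $1$ onto one of measure $4^{1-e}<1$, proving the lemma. (When $k$ omits some prime-order root of unity $\zeta_\ell$ with $\ell\neq\ch k$, there is a quicker witness: $x\mapsto x^\ell$ is a definable bijection $\gm\to(\calk^{\times})^\ell$, and the analogous count gives $\mu((\calk^{\times})^\ell)=\ell^{1-e}$ as soon as $[k(\zeta_\ell):k]$ is prime — e.g. $\ell=3$; for $\ell=2$ one folds $\{$squares$\}$ onto $\{$fourth powers$\}$, at the cost of assuming $\sqrt{-1}\notin k$; the isogeny version is what works uniformly in $k$.) The step I expect to carry all the weight is the Galois bookkeeping above: identifying $L$ and $\gal(L/k(a))$ correctly, and showing that $\varphi(a)$ is governed by $\calk\cap L$ through exactly the one field $k(a)(Q_0)$ — this is what collapses the ratio to $4^{1-e}$, and, for $e=1$ (where $\gal(k)$ is procyclic and $\Gamma$ must be cyclic), keeps the outcome consistent with the counting measure being genuinely invariant under definable bijections.
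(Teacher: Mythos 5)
Your computation is correct as far as it goes, and it is a genuinely different route from the paper's. Where the paper argues uniformly for every $k$ in its setting by taking a regular $S_n$-extension $L$ of $k(a)$ ($n\geq 5$), a tuple $\bar\si$ generating a non-normal subgroup $H$ with normaliser $N$, and $b$ generating $\dcl(ka)\cap L=\Fix(N)$ over $k(a)$, and then comparing $\mu_V$ with $\mu_W$ on the locus $W$ of $(a,b)$ via the identity $\mu_V(S)[N:H]^{e-1}=\mu_W(S')$ (an identity it reuses for Corollary \ref{cor1} and the later change-of-measure proposition), you instead exhibit one concrete measure-decreasing definable injection inside a single variety; this is very close in spirit to the paper's own ``third example'' ($x\mapsto x^5$ when $\zeta_5\notin k$), upgraded by the elliptic-curve twist. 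Your Galois bookkeeping is right: $L=k(a)(Q_0,E[2])$, $\gal(L/k(a))\cong E[2]\rtimes\Gamma$, and since the complement $\gal(L/k(a)(Q_0))$ maps isomorphically onto $\gal(k_L/k)$, no proper overfield of $k(a)(Q_0)$ inside $L$ is regular over $k$ --- that, rather than maximality of $\Gamma$ or irreducibility of $E[2]$, is what forces $\calk\models\varphi(a)\iff\calk\cap L\simeq_{k(a)}k(a)(Q_0)$. The count then gives $4(3^e-1)$ over $4^e(3^e-1)$, i.e.\ $4^{1-e}$ (note the denominator is a plain fibre count over the generating $e$-tuples of $\Gamma$; Gasch\"utz is not needed), against $\mu_E(E(\calk))=1$. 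Your $\ell$-th power fallback likewise gives $\ell^{1-e}$, and the hypothesis that $[k(\zeta_\ell):k]$ be prime is unnecessary.

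The one genuine gap is coverage of the base field. The lemma is asserted for an arbitrary countable perfect $k$ with $\gal(k)\simeq \hat F_e$, and your two witnesses do not exhaust these: the elliptic-curve construction needs $\ch(k)\neq 2$ (both for the model $y^2=f(x)$ and for $E[2]$ to have order $4$), while the power-map fallback needs some prime $\ell\neq \ch(k)$ with $\zeta_\ell\notin k$, since otherwise $x\mapsto x^\ell$ has non-trivial kernel on $\calk^\times$ and is not injective; Artin--Schreier maps fail injectivity for the same reason. There do exist perfect fields $k$ of characteristic $2$ containing $\overline{\mathbb{F}}_2$ (hence all roots of unity of odd order) with $\gal(k)\simeq\hat F_e$ --- e.g.\ the perfect closure of the fixed field of a suitably generic $e$-tuple in $\gal(\overline{\mathbb{F}}_2(t))$ --- and for such $k$ neither witness is available, so your argument as written does not prove the lemma there. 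Patching it would require, say, an isogeny of odd degree with no $k$-rational kernel points (an extra existence argument over such $k$), whereas the paper's $S_n$-construction is characteristic-free and needs no arithmetic input about $k$.
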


\prf Let $a$ be a generic of $V$.  If $L$ is a finite Galois extension of $k(a)$, say with
$\gal(L/k(a))=\gal(L/k_L(a))\rtimes \gal(k_L/k)$, and
$\bar\si\in\gal(L/k(a))^e$ lifts a set of generators of $\gal(k_L/k)$,
we let $K:=\Fix(\bar\si)$. Then, in any perfect $e$-free PAC field $\calk$ containing
$k(a)$ and intersecting $L$ in $K$, we will have that $\dcl(ka)\cap L=
\Fix(N)$, where $N$ is the normaliser in in $\gal(L/k(a))$ of $H:=\langle
\bar\si\rangle$. 
 Indeed,
it is the subfield $E$ of $K$ fixed by the elements of $\aut(K/k(a))$, and if
$\rho\in \gal(L/k(a))$ restricts to an automorphism of $K$, this means
that it normalizes $H$, i.e., belongs to $N$. \\
Assume that $E\neq k(a)$, let $b\in E$ be such that $E=k(a,b)$, and
let $\pi:W\to V$ be the natural projection, where $W$ is the algebraic
locus of $(a,b)$ over $k$. Then $\pi$ defines a bijection between the generics $a'$ of $V$
which satisfy $\theta_K$ and the generics $(a',b')$ of $W$ which satisfy
$\theta^1_K$ (where $\theta^1_K$ is the formula expressing that the
relative algebraic closure of $k(a,b)$ in $L$ is isomorphic to $K$). By compactness, it defines a bijection between some
definable subset $S$ of $V(k)$ and some definable subset $S'$ of
$W(k)$. Let us now
count. \\
By definition (using the second way of counting), $\mu_V(S)=\bar\mu(\theta'_K)$, computed over $k^s(a)$, and
$\mu_W(S')=\bar\mu_1({\theta^1_K}')$, where $\bar\mu$ and $\bar\mu_1$ are the
Haar measures on $\gal(k^s(a))^e$ and 
$\gal(k^s(a,b))^e$ respectively, and ${\theta^1_K}'$ is the analogue of
$\theta'_K$ over $W$. We then have 
$$\mu_V(S)=\phi(k_LK,k_L(a))[N:H][L:k_L(a)]^{-e}$$
and $$\mu_W(S')=\phi(k_LK,k_L(a))[L:k_L(a,b)]^{-e}=\phi(k_LK,k_L(a))[k_L(a,b):k_L(a)]^e[L:k_L(a)]^{-e}.$$
Indeed, the number of field extensions within $L$ which are
$k_L(a)$-isomorphic to $K$ is $[N:H]$, and $K$ is Galois over
$k(a,b)$. From $[k_L(a,b):k_L(a)]=[N:H]$ we obtain the
result, i.e.
$$\mu_V(S)[N:H]^{e-1}=\mu_W(S').\leqno{(*)}$$
It therefore remains to find such an $(L,K,b)$. Let $L$ be a Galois
extension of $k(a)$, which is regular over $k$, and with Galois group
$S_n$, with $n\geq 5$. (Such extensions exist, see e.g. \cite[16.2.5(a)
and 16.2.6]{FJ}). The only non-trivial normal subgroup of $S_n$ is $A_n$, so take
any $\bar\si$ which generates a proper subgroup of $S_n$ not contained
in $A_n$ (e.g., all $\si_i=(1,2)$). Then the normalizer of
$\langle\bar\si\rangle$ is a proper non-trivial subgroup of
$\gal(L/k(a))=S_n$, and because $e>1$, $(*)$ gives the desired inequality. \qed

\begin{cor}\vlabel{cor1} Assume that $e=1$, and let $\mu_V$ be defined
as above for $V\in {\rm Var}_k$. Then
$\mu_V$ is preserved under definable bijections between sets of positive
measure; more precisely, let
$S\subset V$ and $S'\subset W$ be definable subsets of the varieties $V$
and $W$, and which are Zariski dense. If there is a definable bijection
$f$ 
between $S$ and $S'$ then $\mu_V(S)=\mu_W(S')$.
\end{cor}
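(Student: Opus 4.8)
The plan is to reduce to the counting formula $(*)$ established in Lemma \ref{lem2} and observe that it trivializes when $e=1$. Concretely, I would first reduce to the situation where $f$ is essentially a birational map between $V$ and $W$ composed with finitely many pieces: partition $S$ into finitely many pieces on each of which $f$ agrees with a rational map (using elimination of imaginaries / definability of the graph of $f$ over $k$, and the fact that $\mu_V$ vanishes on non-dense sets so only the Zariski-dense pieces matter). On a Zariski-dense piece, pick a generic $a$ of $V$ with $a' := f(a)$ a generic of $W$; since $f$ and $f^{-1}$ are definable over $k$, we get $k(a')\subseteq \dcl(ka)$ and $k(a)\subseteq \dcl(ka')$, so $\dcl(ka) = \dcl(ka')$.

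Next I would run exactly the bookkeeping from the proof of Lemma \ref{lem2}. For a sufficiently large finite Galois extension $L$ of $k(a)$ (large enough to decide membership in $S$ via types, by Corollary \ref{types}, and with $\gal(L/k(a))$ split over $\gal(k_L/k)$), every $K_i$ appearing in the expansion of the formula defining $S$ is a regular extension of $k$ inside $L$. The bijection $f$ matches generics of $V$ satisfying $\theta_{K_i}$ with generics of $W$ satisfying the analogous formula $\theta^1_{K_i}$ over $W$, where $\theta^1_{K_i}$ says the relative algebraic closure of $k(a')$ in $L$ is $\simeq K_i$. Since $\dcl(ka) = \dcl(ka')$, the field $E = \dcl(ka)\cap L$ is the same whether computed from $a$ or from $a'$; in particular $k(a')$ and $k(a)$ have the same relative algebraic closure behavior inside $L$ up to the $[N:H]$ factor, where $H = \langle\bar\si\rangle$ and $N$ its normalizer in $\gal(L/k(a))$. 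Then the formulas $(*)$ from Lemma \ref{lem2} give
$$\mu_V(\theta_{K_i})\,[N:H]^{e-1} = \mu_W(\theta^1_{K_i}).$$
Setting $e=1$ makes $[N:H]^{e-1} = 1$, so $\mu_V(\theta_{K_i}) = \mu_W(\theta^1_{K_i})$ for each $i$; summing over the relevant $i$ yields $\mu_V(S) = \mu_W(S')$.

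The main obstacle is the reduction step: justifying that an arbitrary definable bijection $f$ between Zariski-dense definable sets can be decomposed into finitely many pieces each realized by a rational (or at least finite-to-finite correspondence) map, and that passing to generics is legitimate. This uses that PAC fields of this kind eliminate imaginaries over the field sort in a suitable sense, that $\dcl$ in the field is controlled by Corollary \ref{types}, and that the measure is insensitive to lower-dimensional sets. Once the generic-type picture is in place, the rest is the same computation as in Lemma \ref{lem2} with the exponent $e-1$ vanishing. A secondary point to handle carefully is that $S$ and $S'$ need not be irreducible-variety-shaped, so one works with the decomposition into the finitely many mutually exclusive $\theta_{K_i}$-events on a common Zariski-open set, exactly as in the construction of $\mu_V$.
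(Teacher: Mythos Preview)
Your approach is essentially the paper's: pass to a generic $a$, observe that the definable bijection forces $\dcl(ka)=\dcl(kf(a))$, and invoke $(*)$ from Lemma~\ref{lem2}, which collapses when $e=1$. The paper's proof is terser but identical in substance.

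One remark: the ``main obstacle'' you flag---decomposing $f$ into rational pieces---is not actually needed, and the paper does not perform it. All that the computation in Lemma~\ref{lem2} requires is the equality $\dcl(ka)=\dcl(kc)$ for $c=f(a)$, which follows immediately from $f$ being a definable bijection; there is no need for $f$ to be (piecewise) rational. The paper simply takes $b$ generating $\dcl(ka)\cap L$ over $k(a)$, so that $(*)$ applies to the projection from the locus of $(a,b)$ down to $V$ and, symmetrically (since $\dcl(kc)\cap L$ is the same field), down to $W$; with $e=1$ both factors $[N:H]^{e-1}$ are $1$.
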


\prf Let $a$ be a generic of $V$, and let $L$ be a finite Galois
extension of $k(a)$, such that whenever an $e$-free PAC field $\calk$
contains $a$ and is regular over $k$,
then whether $a$ belongs to $S$ or not is determined by  $L\cap \calk$. We may assume that
$\gal(L/k(a))=\gal(L/k_L(a))\rtimes \gal(k_L/k)$, and that for some subfield
$K$ of $L$, $a\in S(\calk)$ if and only if $\calk\cap
L\simeq_{k(a)}K$. Let $c=f(a)$, and let $b\in \calk\cap L$ generate
$\dcl(ka)\cap L$ over $k(a)$. Note that because $f$ is a bijection,
$\dcl(kc)=\dcl(ka)$. The result follows from $(*)$ in Lemma \ref{lem2}. \qed

\begin{example} (Another example). We will build an example of a
  definable bijection $f$ between two definable subsets of some $V \in {\rm Var}_{k}$ in a perfect $e$-free PAC field $k$ with $e > 1$
  which does not preserve the measure. 
\end{example}

\prf We assume $k$ contains a primitive $5$th root of $1$. Let $u,v\in
k^\times$, $a$ transcendental over $k$, and consider $u+av$, $u-av$, and
let $L=k(a,\sqrt[5]{u+av},\sqrt[5]{u-av})$. Then $L$ is Galois over
$k(a^2)$, with Galois group $\zee/5\zee \wr \zee/2\zee$. Let
$K=k(a,\sqrt[5]{u+av})$, and $\calk$ any $e$-free perfect PAC with $e > 1$, which is
regular over $k$ and intersects $L$ in $K$. Then, inside $\calk$, we
have $\dcl(ka^2)=\dcl(ka)=k(a)$. Indeed, $a$ satisfies the following
formula $$x^2=a^2\land \exists y\, y^5=u+xv,$$ 
while $-a$ satisfies its negation. \qed 

\begin{example} (And a third example). Assume now that $k$ does not
contain a primitive $5$th root $\zeta$ of $1$, and consider the map $f:x\mapsto
x^5$. It is an injective map on any regular extension of $k$. If
$L=k(a,\zeta,\sqrt[5]a)$, then $\gal(L/k(a))\simeq \zee/5\zee \rtimes
\gal(k(\zeta)/k)$. If $\si\in \gal(L/k(a))$ restricts to a generator of
$\gal(k(\zeta)/k)$, then its order equals $[k(\zeta):k]$. So, when $e=1$,
every $e$-free  field which is a regular extension of $k$ is closed
under $5$th roots. Suppose $e\geq 2$, and let $S$ be the image of $f$
in an $e$-free PAC $\calk$ which is regular over $k$. Then $\mu_{\gm}(S)=5^{1-e}$.
\end{example}


\begin{fact} \label{ImFact} (\cite{IH}, Theorem 17) 
Let $K$ be a perfect PAC field with pro-cyclic Galois group.  There is a unique function $\mu: \mathrm{Def}(K) \to \mathbb{Q}$ such that 
\begin{enumerate}
\item $\mu(K) = 1$.
\item $\mu$ is invariant under definable bijection.
\item If $X_{1}, X_{2} \in \mathrm{Def}(K)$ are disjoint, then $\mu(X_{1} \cup X_{2}) = \mu(X_{1})+\mu( X_{2})$ if $\mathrm{dim}X_{1} = \dim X_{2}$; otherwise, $\mu(X_{1} \cup X_{2}) = \max \{\mu (X_{1}), \mu(X_{2})\}$.  
\item Suppose $W \twoheadrightarrow^{G} V$ is a Galois cover with $W \in {\rm Var}_{K}$, and suppose $C \subseteq \mathrm{Hom}(\gal(K),G)$ is an orbit (under the action of $G$ by conjugation).  Then the measure of $X(W \twoheadrightarrow^{G} V,C)$ only depends on $C$ and $G$, as an abstract group. 
\end{enumerate}
\end{fact}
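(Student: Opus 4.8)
\smallskip\noindent
The plan is to prove existence and uniqueness separately. For existence one checks that the nonstandard counting measure has the required properties: for pseudo-finite $K$ this is the measure of \cite{CDM}, and for a general procyclic $\gal(K)$ one uses instead a measure built from the Haar measure on the absolute Galois group of a suitable countable base field, in the spirit of \cite{CDM} and \cite{JK} (compare Section~2). Properties (1)--(3) are basic features of this measure. For (4), the measure of $X(W\twoheadrightarrow^{G}V,C)$ is computed by an explicit formula: in the pseudo-finite case, Lang--Weil together with a Chebotarev-type count gives $|X(W\twoheadrightarrow^{G}V,C)(\mathbb F_q)|=\frac{|C|}{|G|}|V(\mathbb F_q)|+O(q^{\dim V-1/2})$, so the measure is $|C|/|G|$; in general one obtains the Haar-measure proportion of $\rho\in\Hom(\gal(K),G)$ lying in $C$. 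Either way the answer depends only on $C$ and the abstract group $G$ (with $\gal(K)$ held fixed), which is exactly (4).

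For uniqueness, fix a function $\mu$ satisfying (1)--(4). First, applying (4) to the trivial cover $V\xrightarrow{\{1\}}V$ — whose single stratum is all of $V$ — shows $\mu(V)$ is a constant independent of $V\in{\rm Var}_{K}$; comparing $V=\mathbb A^{1}$ (where $\mu(\mathbb A^{1})=\mu(K)=1$ by (1)) with $V=\mathbb A^{d}$ gives $\mu(V)=1$ for all absolutely irreducible $V$. Using (3) one also gets $\mu(\emptyset)=0$, $\mu\geq 0$, monotonicity, and that deleting a subset of smaller dimension does not change $\mu$. Now by the description of definable sets in perfect PAC fields underlying Corollary~\ref{types} and Remark~\ref{theta}, every definable $X$ is, modulo a definable set of smaller dimension, a finite disjoint union of pieces $X(W_{i}\twoheadrightarrow^{G_{i}}V,C_{i})\cap U$ with $V\in{\rm Var}_{K}$ of dimension $\dim X$, $U\subseteq V$ Zariski-open and $W_{i}\to V$ connected Galois covers. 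Hence, by the above and (1), (3), $\mu(X)=\sum F(G_{i},C_{i})$, where $F(G,C):=\mu(X(W\twoheadrightarrow^{G}V,C))$ is a universal constant by (4); moreover $\sum_{C}F(G,C)=\mu(\mathbb A^{1})=1$, the sum over conjugacy classes of $G$ (empty strata contributing $0$). So $\mu$ is determined once all the $F(G,C)$ are.

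For cyclic $G=\zee/n$, take a connected $\zee/n$-cover of $\gm$: a Kummer cover $t\mapsto t^{n}$ when $\mu_{n}\subseteq K$, and otherwise a rigid realization over the prime field (a point I return to). Its strata partition $\gm$ up to finitely many points, and multiplication by a fixed $c\in K^{\times}$ of class $j_{0}$ in $K^{\times}/(K^{\times})^{n}\cong\Hom(\gal(K),\mu_{n})$ is a definable bijection carrying the $\{j\}$-stratum to the $\{j+j_{0}\}$-stratum; by (2) all nonempty $F(\zee/n,\{j\})$ are equal, so each equals $1/n$ when $\gal(K)=\hat{\mathbb Z}$. For general $G$ one argues by induction on $|G|$: given $(G,C)$ with $C$ the class of $g$ and $H=\langle g\rangle$, the sub-cover $W\twoheadrightarrow^{H}W/H$ is connected cyclic, and the projection $W/H\to V$ maps its $\{h\}$-stratum into $X(W\twoheadrightarrow^{G}V,C_{h})$ with constant fibre size $|C_{G}(g)|/|H|$ when $h\in C$; comparing the $\mu$-measures and invoking the group-theoretic identity $|\{x : x^{-1}gx\in H\}|=|H\cap C|\cdot|C_{G}(g)|$ yields $F(G,C)=1/|C_{G}(g)|=|C|/|G|$ for $C\neq\{1\}$, after which $F(G,\{1\})$ is forced by $\sum_{C}F(G,C)=1$.

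The main obstacle is exactly the last comparison — $\mu$ of the image expressed through $\mu$ of the stratum. One would like this to be multiplication by the fibre size, but this fails in general: over a PAC field a geometrically connected cover can be \emph{arithmetically} split (all the relevant $\rho_{v}$ land in one subgroup) without admitting a definable section, so a definable surjection can be uniformly $N$-to-$1$ on $K$-points while its source, an absolutely irreducible variety, still has measure $1$ rather than $N$. Overcoming this requires using the type analysis of Section~2 to determine exactly when the cover splits definably over the relevant stratum — it does for $C\neq\{1\}$, where the marked element $h\neq 1$ supplies enough rigidity, after a further reduction to the case $C_{G}(g)=\langle g\rangle$ — and to treat the totally split stratum $C=\{1\}$ by subtraction. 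The remaining technical points, namely realizing connected cyclic covers when $K$ has too few roots of unity and making the reduction to $C_{G}(g)=\langle g\rangle$ precise, are handled by descent and transport arguments using (4).
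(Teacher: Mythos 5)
First, a point of orientation: the paper does not prove Fact \ref{ImFact} at all. It is quoted, with attribution, from Halupczok (\cite{IH}, Theorem 17) and used as a black box in the proposition that follows it (to identify the $e=1$ measure $\mu_V$ with the counting measure of \cite{CDM}). So there is no proof in the paper to compare yours against; what you have written is an attempted reconstruction of Halupczok's theorem, and it must be judged on its own terms. On those terms the existence half is essentially asserted: the delicate point is condition (2), invariance under \emph{all} definable bijections, and this is not a ``basic feature'' of a Haar-measure-style construction --- the paper's own Lemma \ref{lem2} shows the analogous measure fails (2) as soon as $e>1$, and even for $e=1$ an argument is needed (Corollary \ref{cor1}). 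Nothing in your sketch addresses this, nor the case of procyclic groups other than $\hat{\zee}$, where no Lang--Weil/Chebotarev count is available.

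In the uniqueness half the missing step is exactly the one you flag and then wave through. Axioms (1)--(4) contain no Fubini-type condition, so a definable surjection that is uniformly $N$-to-$1$ on $K$-points need not multiply the measure by $N$; your proposed repair --- that over the strata with $C\neq\{1\}$ the map $W/H\to V$ splits definably because ``the marked element supplies enough rigidity'' --- is asserted, not proved, and nothing in Corollary \ref{types} or Remark \ref{theta} produces a definable labelling of the points of a finite fibre (fields do not in general admit definable choice on finite definable sets). As written, this is the hard core of the theorem, left open, and the deferred points (realizing connected cyclic covers when $K$ lacks roots of unity, the reduction to $C_G(g)=\langle g\rangle$) are likewise unproved. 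Moreover the closed-form values you land on, $F(G,C)=|C|/|G|$, are correct only when $\gal(K)\cong\hat{\zee}$: for $\gal(K)\cong\zee_p$ with $p$ odd, every element of $K^\times$ is a square, so the nontrivial $\zee/2$-stratum is empty (it is not even an orbit in $\Hom(\gal(K),\zee/2)$) and the trivial stratum has measure $1$; a correct uniqueness argument must pin the forced values in terms of the orbits actually occurring in $\Hom(\gal(K),G)$, which your cyclic-case translation argument does, but your inductive step silently reverts to the $\hat{\zee}$ formula. Finally, several bookkeeping facts you extract from (3) (non-negativity, monotonicity, and that deleting a lower-dimensional definable set leaves $\mu$ unchanged) are not immediate formal consequences of (1)--(4) as condensed here, yet your stratification step uses them. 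In short: a reasonable plan in the pseudo-finite-like case, but not a proof of the Fact --- which, to be clear, the paper itself imports rather than proves.
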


Here $X(W \twoheadrightarrow^{G} V,C)$ is a certain definable subset of the absolutely irreducible variety $V$. We refer the reader to \cite[Definitions 1 \& 17]{IH} for definitions of all the objects mentioned in Fact \ref{ImFact} and further details.  

\begin{prop} Let $e=1$. Then the measure $\mu_V$, $V\in {\rm Var}_k$, coincides with the
``non-standard counting measure'' defined in \cite{CDM}.
\end{prop}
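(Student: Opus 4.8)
\prf The plan is to invoke Halupczok's characterization, Fact~\ref{ImFact}, which applies because $e=1$ makes $\gal(k)$ procyclic ($\gal(k)\simeq\hat F_1=\hat\zee$, and $k$ is perfect PAC by the standing hypotheses); recall that the non-standard counting measure of \cite{CDM} is exactly the unique function furnished by that fact, since it is known to satisfy conditions (1)--(4) (\cite{IH}). So it suffices to assemble the family $(\mu_V)_{V\in{\rm Var}_k}$ into a single function $\mu$ on $\mathrm{Def}(k)$ and check (1)--(4) for it. For the assembly: if $X$ is definable with $d=\dim X$, the $d$-dimensional irreducible components $V_1,\dots,V_m$ of the Zariski closure $\overline X$ are automatically absolutely irreducible over $k$ --- a $k$-irreducible variety that is not geometrically irreducible has all its $k$-rational points in codimension $\geq 1$, hence cannot be such a component --- and one sets $\mu(X)=\sum_j\mu_{V_j}(X\cap V_j)$, with $\mu(\emptyset)=0$. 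The pairwise intersections $V_i\cap V_j$ have dimension $<d$, so they are $\mu_{V_j}$-null and there is no overcounting; in particular $\mu(X)=\mu_V(X)$ whenever $X$ is Zariski-dense in an absolutely irreducible $V$, and a finite set of size $n$ gets measure $n$.

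Conditions (1)--(3) should be routine. Condition (1), $\mu(k)=\mu_{\aff^1}(\aff^1)=1$, is built into the definition. For (2): given a definable bijection $f\colon X\to X'$ with $\dim X=\dim X'$, decompose both sides into their top-dimensional absolutely irreducible components, discard the lower-dimensional overlaps (which are $\mu$-null), and reduce to a definable bijection between Zariski-dense subsets of two absolutely irreducible varieties; that case is exactly Corollary~\ref{cor1}. For (3): if $X_1,X_2$ are disjoint with $\dim X_1=\dim X_2=d$, the $d$-dimensional components of $\overline{X_1\cup X_2}$ are those of $\overline{X_1}$ together with those of $\overline{X_2}$, the traces of $X_1$ and $X_2$ on each such component are disjoint, and each $\mu_{V_j}$ is finitely additive --- because $\mu_{V_j}$ of a formula is by construction the sum of the $\mu_{V_j}(\theta_i)$ over mutually exclusive test-formula cases and the Jarden--Kiehne measure is finitely additive (see \ref{JK}); when $\dim X_1>\dim X_2$ the components of $\overline{X_2}$ do not reach dimension $\dim X_1$, so $\mu(X_1\cup X_2)=\mu(X_1)=\max\{\mu(X_1),\mu(X_2)\}$.

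Condition (4) is the crux. Fix a Galois cover $W\twoheadrightarrow^{G}V$ with $W$ absolutely irreducible and an orbit $C\subseteq\mathrm{Hom}(\gal(k),G)$; let $a$ be a generic of $V$ over $k$ and $L=k(W)$, so $L/k(a)$ is Galois with group $G$ and $k_L=k^{alg}\cap L=k$ by absolute irreducibility of $W$. I would unwind the definition of $X(W\twoheadrightarrow^{G}V,C)$ from \cite{IH} and show that membership of a generic point is governed by a Boolean combination of the test-formulas $\theta_{K_i}$ of Remark~\ref{theta}, with index set determined group-theoretically by $(G,C)$ --- possibly after replacing $L$ by a finite Galois extension $\tilde L$ of $k(a)$ that adjoins the relevant roots of unity, so that the conjugacy-class data of $C$ (not merely the image subgroup) becomes visible. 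Then $\mu_V(X(W\twoheadrightarrow^{G}V,C))=\sum_{i}\mu_V(\theta_{K_i})$ over that index set, and since $e=1$ and $k_L=k$ the denominator $|\{\bar\si\in\gal(L/k(a))^e\mid\Fix(\bar\si)\cap k_L=k\}|$ in the formula for $\mu_V(\theta_{K_i})$ collapses to $|G|$ while the numerator counts the $\si\in G$ generating a subgroup conjugate to $\gal(L/K_i)$; hence $\mu_V(X(W\twoheadrightarrow^{G}V,C))$ is a ratio of cardinalities of subsets of $G$ defined purely in terms of the abstract group $G$ and the orbit $C$, which is (4). The main obstacle is precisely this dictionary between the \cite{IH} set $X(W\twoheadrightarrow^{G}V,C)$ and an explicit test-formula description, together with the Kummer-theoretic bookkeeping (the passage to $\tilde L$) needed when $C$ separates elements of $G$ generating one and the same cyclic subgroup, and the verification that after this enlargement the count still depends only on $(G,C)$ and not on the arithmetic of $k$. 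Granting (1)--(4), Fact~\ref{ImFact} identifies $\mu$ with the non-standard counting measure of \cite{CDM}, which proves the Proposition.
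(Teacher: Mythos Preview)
Your approach is the paper's: invoke Halupczok's uniqueness (Fact~\ref{ImFact}) and verify conditions (1)--(4), with (2) being exactly Corollary~\ref{cor1}. You are more explicit than the paper in assembling the family $(\mu_V)_{V\in{\rm Var}_k}$ into a single function on $\mathrm{Def}(k)$ and in justifying (3); the paper simply declares (1) and (3) immediate.

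For (4) the paper is terser and more direct than you. Since $W$ is absolutely irreducible, $k_L=k$, so the defining ratio for $\mu_V(\theta_{K'})$ is computed entirely inside $G=\gal(L/k(a))$: the denominator is $|G|$ and the numerator counts elements of $G$ with fixed field $k(a)$-isomorphic to $K'$. Hence $\mu_V(\theta_{K'})$ visibly depends only on the abstract pair $(G,\gal(L/K'))$, which the paper identifies with $(G,C)$ for $C$ the conjugacy class of a generator of $\gal(L/K')$, and stops there. Your worry that an orbit $C\subseteq\Hom(\gal(k),G)$ may be strictly finer than the cyclic subgroup it generates, and the proposed detour through an enlarged $\tilde L$ with Kummer-theoretic bookkeeping, do not appear in the paper. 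Whether that extra care is genuinely required hinges on Halupczok's precise definition of $X(W\twoheadrightarrow^{G}V,C)$, which neither argument fully unpacks; the paper treats the identification as a ``straightforward inspection of the definitions''.
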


\prf We will check that our measure satisfies the conditions of Fact \ref{ImFact}.  Conditions (1) and (3) are immediate. Condition (2) is Corollary \ref{cor1}. As for
  Condition (4), a straightforward inspection of the definitions gives
that our measure satisfies condition (4). Indeed, it follows from the following remark: let $f:W\to V$ be a Galois cover with
Galois group $G$ and with {$W$} absolutely irreducible. If $b$ is a
generic of $W$ over $K=k$, and $L=k(b)$, $a=f(b)$, then $a$ is a generic
of $V$, and $L\cap k^{alg}=k$,
so that if $K'$ is a subfield of $L$ with $\gal(L/K')$ pro-cyclic, then the
set $S$ of elements $a'\in V(k)$ which satisfy $\theta_{K'}$, has
measure which only depends on the pair $(G,\gal(L/K'))$, or equivalently,
on the pair $(G,C)$, where $C$ is the conjugacy class of a generator of
$\gal(L/K')$. That our measure satisfies (4) then follows from its
definition.  \qed

\begin{prop} Let $e>1$, let $k$ be a perfect $e$-free PAC field, let $V, W \in {\rm Var}_{k}$, and $f:S_1\to S_2$ a definable
bijection 
 between Zariski dense definable subsets $S_1$ of $V$ and $S_2$ of  $W$,  (so that
 $\dim(V)=\dim(W)$). Then there is a partition of $S_1$ into finitely
 many definable sets $U_1,\ldots,U_r$, and rational numbers
 $e_1,\ldots,e_r$, such that whenever 
  $D\subset U_i$ is definable,
 then $\mu_V(D)=e_i\mu_W(f(D))$. 
 \end{prop}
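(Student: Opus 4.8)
The plan is to reduce the statement, atom by atom, to a symmetric version of the bookkeeping in the proof of Lemma \ref{lem2}, using the explicit formula for $\mu_V$ displayed just before Proposition \ref{counting2}. For $a\in S_1$ write $c:=f(a)$; since $f$ and $f^{-1}$ are $k$-definable we have $c\in\dcl(ka)$ and $a\in\dcl(kc)$, hence $\dcl(ka)=\dcl(kc)$, and since $f$ is a bijection between Zariski dense sets with $\dim V=\dim W$ both $k(a,c)/k(a)$ and $k(a,c)/k(c)$ are finite (and separable, as $\dcl(ka)\subseteq k(a)^s$ by Corollary \ref{types}). Put $\delta_V(a):=[k(a,c):k(a)]$ and $\delta_W(a):=[k(a,c):k(c)]$. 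Inspecting the irreducible components of the Zariski closure of the graph of $f$ which dominate $V$ (each has dimension $\dim V=\dim W$ and dominates $W$ as well), one sees that $\delta_V,\delta_W$ take only finitely many values on $S_1$, each on a $k$-definable set; this gives the partition $U_1,\dots,U_r$, the parts on which $f$ decreases dimension (which are $\mu_V$-null with $\mu_W$-null image) being absorbed into any piece. On the piece where $(\delta_V,\delta_W)$ equals $(\delta^i_V,\delta^i_W)$ I will take $e_i:=(\delta^i_W/\delta^i_V)^{e-1}\in\rat$.

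Next I fix $i$ and a definable $D\subseteq U_i$ (if $D$ is not Zariski dense both sides vanish, so assume it is) and reduce to a single atom. Choose a finite Galois extension $L$ of $k(a)$ which contains $k(a,c)$, is Galois over $k(c)$ and over $k(a,c)$, and is large enough that membership of $a'$ in $D$, and of $c'$ in $f(D)$, is determined respectively by the $k(a')$- and the $k(c')$-isomorphism type of $\calk\cap L$ (such an $L$ exists by iterating Galois closures). By Corollary \ref{types} and Remark \ref{theta}, on generics $D$ is, up to a $\mu_V$-null set, a finite disjoint union of atoms $\{a':\theta_K(a')\}$, where $K\subseteq L$ contains $k(a)$, is regular over $k$, $H:=\gal(L/K)$ is an $e$-generated quotient of $\gal(k)$, and $\theta_K$ forces $D$. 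Because $\dcl(ka)=\dcl(kc)$ and $k(a,c)\subseteq\dcl(ka)$, the argument of Lemma \ref{lem2} shows $K=\calk\cap L\supseteq k(a,c)$, so $H\leq\gal(L/k(a,c))$, the same $K$ is an admissible choice over $k(c)$, and $f$ carries $\{a':\theta_K(a')\}$ onto $\{c':\theta^W_K(c')\}$, where $\theta^W_K$ is the test-formula over $k(c)$ attached to $K$. Since $\mu_V$ and $\mu_W$ are additive on full-dimensional definable sets and $f$ is a bijection, it suffices to prove $\mu_V(\theta_K)=e_i\,\mu_W(\theta^W_K)$ for a single such $K$.

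For this, let $a$ be a generic of $V$ over $k$ with $\theta_K(a)$ and $c=f(a)$. Using the formula before Proposition \ref{counting2},
\[
\mu_V(\theta_K)=\frac{\bigl|\{\bar\tau\in\gal(L/k(a))^e:\Fix(\bar\tau)\simeq_{k(a)}K\}\bigr|}{\bigl|\{\bar\tau\in\gal(L/k(a))^e:\Fix(\bar\tau)\cap k_L=k\}\bigr|},
\]
and likewise over $k(c)$. In the numerator $\Fix(\bar\tau)\simeq_{k(a)}K$ iff $\langle\bar\tau\rangle$ is conjugate to $H$ in $\gal(L/k(a))$, so the numerator is the number of conjugates of $H$ times the number of generating $e$-tuples of $H$; by the identity $\dcl(ka)\cap L=\Fix\bigl(N_{\gal(L/k(a))}(H)\bigr)$ from Lemma \ref{lem2} the number of conjugates is $[\,\dcl(ka)\cap L:k(a)\,]$. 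By the computation in the proof of Proposition \ref{counting2} the denominator is $\phi(k_L)\,[L:k_L(a)]^e=\phi(k_L)\,\bigl([L:k(a)]/[k_L:k]\bigr)^e$. Running the same computation over $k(c)$ and dividing, the factors attached to $H$ and to $k_L$ cancel; writing $E:=\dcl(ka)\cap L=\dcl(kc)\cap L$ and using $[L:k(a)]=[L:E][E:k(a)]$, $[L:k(c)]=[L:E][E:k(c)]$, one obtains
\[
\frac{\mu_V(\theta_K)}{\mu_W(\theta^W_K)}=\left(\frac{[E:k(c)]}{[E:k(a)]}\right)^{e-1}=\left(\frac{[k(a,c):k(c)]}{[k(a,c):k(a)]}\right)^{e-1}=\left(\frac{\delta^i_W}{\delta^i_V}\right)^{e-1}=e_i,
\]
the middle equality because $E\supseteq k(a,c)\supseteq k(a),k(c)$. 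This proves the proposition; for $e=1$ it recovers Corollary \ref{cor1}, and the $S_n$-example in the proof of Lemma \ref{lem2} is the case $\delta_V=1$.

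The step I expect to be the main obstacle is the choice of $L$: producing a single finite Galois extension that is simultaneously Galois over $k(a)$, over $k(c)$ and over $k(a,c)$ and large enough on both sides, and then checking carefully — using $\dcl(ka)=\dcl(kc)$ and $H\leq\gal(L/k(a,c))$ — that $f$ really does identify the $\theta_K$-atoms over $V$ with the $\theta^W_K$-atoms over $W$ for the same $K$, and that the invariants $\delta_V,\delta_W$ are finitely valued and constant on the pieces $U_i$. Once these matchings are in place the remaining index computation is routine.
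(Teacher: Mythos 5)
Your overall route is the same as the paper's: partition $S_1$ according to the top--dimensional irreducible components of the Zariski closure of the graph of $f$, and on each piece compare the two measures by the conjugate--counting formula; your constant $e_i=\bigl([k(a,c):k(c)]/[k(a,c):k(a)]\bigr)^{e-1}$ is exactly the one in the paper, and the per-atom index computation (number of conjugates of $H$ equals $[E:k(a)]$ with $E=\Fix(N)$, cancellation of the number of generating $e$-tuples of $H$, of $\phi(k_L)$, and of $[E:k(a,c)]$) is correct. However, the step you flag at the end is a genuine gap, not a routine verification: you need a single finite extension $L$, containing a prescribed large extension determined by $D$, which is simultaneously Galois over $k(a)$, over $k(c)$ and over $k(a,c)$, and your justification ``by iterating Galois closures'' fails --- the iteration need not terminate and such an $L$ need not exist when $k(a)$ and $k(c)$ are incomparable. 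For instance, take $k$ with no primitive $5$th root of unity and let $f$ be the conjugate of $x\mapsto x+1$ by the injective $5$th-power map, restricted to the set of $5$th powers; for $a$ generic and $c=f(a)$ one gets $k(a)=k(b^5)$ and $k(c)=k((b+1)^5)$ inside $k(a,c)=k(b)$, with $k(a)\cap k(c)=k$. Any $L$ containing a sufficiently large extension $M_0$ of $k(b)$ and normal over both $k(a)$ and $k(c)$ would admit the group generated by $\gal(L/k(a))$ and $\gal(L/k(c))$, whose fixed field is $k$; once $M_0$ forces $L$ to be of general type this group is finite (Hurwitz), so its fixed field has transcendence degree $1$ over $k$, a contradiction. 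Since $D$ is an arbitrary definable subset of $U_i$, you cannot control which $M_0$ must be contained in $L$.

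The gap is repairable without changing your constants. Either argue as the paper does, comparing each of $\mu_V$ and $\mu_W$ with the measure attached to the locus of a generator of $\dcl(ka)\cap L=\dcl(kc)\cap L$ via the identity $(*)$ of Lemma \ref{lem2}; there every comparison involves nested function fields, so Galois closures cause no difficulty. Or keep your symmetric computation but do the counting in $\gal(L/k(a,c))^e$ with $L$ Galois only over $k(a,c)$ and deciding, over $k(a,c)$, the statements ``$a\in D$ and $f(a)=c$''; then pass to Galois closures $L_a$ of $L$ over $k(a)$ and $L_c$ over $k(c)$ separately, and use the Claim in the proof that $\mu_V$ is well defined (each regular $\bar\si$ at level $L$ has exactly $\phi(k_{L_a}/k_L,k)[L_a:k_{L_a}L]^e$ regular lifts) to reduce the level-$L_a$ and level-$L_c$ counts to the same number $a_1$ of tuples in $\gal(L/k(a,c))^e$ whose fixed field is regular over $k$ and forces ``$a\in D$ and $f(a)=c$''; the $[k(a,c):k(a)]$ conjugates of $c$ over $k(a)$ give $\mu_V(D)=[k(a,c):k(a)]\,a_1\,\phi(k_L)^{-1}[L:k_L(a)]^{-e}$, and symmetrically for $W$, whence the ratio $e_i$. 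This symmetric count also disposes of your other delicate point, the atom-matching: as written it needs an extra argument (two subfields containing $k(a,c)$ that are conjugate over $k(c)$ but not over $k(a)$ would produce two preimages of $c$, contradicting injectivity of $f$), whereas the global count over $\gal(L/k(a,c))$ avoids matching atoms altogether.
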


 \prf The Zariski closure of the graph of $f$ has finitely many
 irreducible components of dimension $\dim(V)$, and these in turn induce
 a partition of $S_1$ into finitely many definable subsets. We will
 therefore assume that the Zariski closure of the graph $\Gamma$ of $f$
 is irreducible. \\
 Let $(a,b)$ be a generic of $\Gamma$ over $k$, in some elementary
 extension of $k$, and with $f(a)=b$. By Corollary \ref{types}, there is a finite
 extension $L$ of $k(a,b)$ such that the isomorphism type over $k(a,b)$ of
 $k(a,b)^{alg}\cap L$ implies ``$f(a)=b$''. Let $c\in L$ be such
 that $\dcl(ka)\cap L=k(c)$. The computation made in Lemma \ref{lem2} then
 gives, for $R_1$ a definable subset of $S_1$:
 $$\mu_V(R_1)[k(c):k(a)]^{e-1}=\mu_W(f(R_1))[k(c):k(b)]^{e-1},$$
 i.e.,
 $$\mu_V(R_1)=\Bigl(\frac{[k(a,b):k(b)]}{[k(a,b):k(a)]}\Bigr)^{e-1}\mu_W(f(R_1)).$$

\begin{prop}\vlabel{rem1} If $V,W\in {\rm Var}_k$ are
birationally isomorphic by a map $f$, then $f$ preserves the measure:  if
$S\subset V$ is definable, then $\mu_V(S)=\mu_W(f(S))$. 
\end{prop}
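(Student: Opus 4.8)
The plan is to reduce this to the previous proposition (the one immediately preceding it, giving the general change-of-measure formula for an irreducible definable bijection). Since $f:V\dashrightarrow W$ is a birational isomorphism, there are Zariski open dense subsets $V_0\subseteq V$ and $W_0\subseteq W$ such that $f$ restricts to a genuine (regular) isomorphism $V_0\to W_0$; its graph $\Gamma$ is then the Zariski closure of the graph of an isomorphism, hence irreducible of dimension $\dim(V)=\dim(W)$. Now take $(a,b)$ a generic of $\Gamma$ over $k$ with $b=f(a)$. Because $f$ is birational, we have $k(a)=k(a,b)=k(b)$ as fields: indeed $b$ is a $k$-rational function of $a$ and $a$ is a $k$-rational function of $b$. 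The key point is therefore that the two multiplicative correction factors appearing in the previous proposition both collapse.

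First I would make this precise. In the notation of the preceding proposition, one chooses a finite Galois extension $L$ of $k(a,b)$ detecting the relevant isomorphism type, and $c\in L$ with $\dcl(ka)\cap L=k(c)$. The formula from Lemma \ref{lem2} gives, for any definable $R\subseteq S$,
$$\mu_V(R)=\Bigl(\frac{[k(a,b):k(b)]}{[k(a,b):k(a)]}\Bigr)^{e-1}\mu_W(f(R)).$$
But $k(a,b)=k(a)$ and $k(a,b)=k(b)$, so both degrees equal $1$, and the correction factor is $1$ regardless of $e$. Hence $\mu_V(R)=\mu_W(f(R))$ for every definable $R$ contained in the relevant Zariski dense set; in particular, taking $R=S$ (after intersecting with $V_0$, which changes nothing since $V\setminus V_0$ is not Zariski dense and so has measure $0$), we get $\mu_V(S)=\mu_W(f(S))$.

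The only genuine obstacle is bookkeeping around the domain of definition: $f$ is only a birational, not everywhere-defined, isomorphism, so I must check that restricting attention to the open locus where $f$ and $f^{-1}$ are both regular does not affect the measure. This is immediate from the convention in the setup of Section 3 that a definable set which is not Zariski dense in $V$ has measure $0$: both $V\setminus V_0$ and $W\setminus W_0$ are proper closed subsets, hence of measure zero, so $\mu_V(S)=\mu_V(S\cap V_0)$ and $\mu_W(f(S))=\mu_W(f(S)\cap W_0)$, and on $V_0$ the map $f$ is an honest definable bijection onto $W_0$ to which the displayed identity applies. (One could alternatively invoke the $e=1$ case via Corollary \ref{cor1} and the $e>1$ formula separately, but the unified argument above via the preceding proposition is cleaner, since a birational map forces the degree ratio to be $1$ in all cases.) I do not expect any difficulty beyond this routine check.
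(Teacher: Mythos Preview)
Your proof is correct, but it takes a different route from the paper's. You reduce to the change-of-measure formula from the preceding proposition, observing that for a birational map the degree ratio $[k(a,b):k(b)]/[k(a,b):k(a)]$ collapses to $1$ because $k(a)=k(a,b)=k(b)$. This is a perfectly good argument, and your handling of the domain-of-definition issue (passing to the open locus where $f$ is biregular, at no cost since the complement has measure $0$) is fine.

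The paper instead argues directly from the definition of $\mu_V$. A birational isomorphism $f$ induces a $k$-isomorphism $k(V)\to k(W)$, which extends to a $k^{alg}$-isomorphism of algebraic closures and hence to an isomorphism $\gal(k(V))\to\gal(k(W))$ that restricts to the identity on $\gal(k)$. This carries finite Galois extensions $L/k(V)$ to finite Galois extensions $\tilde f(L)/k(W)$ with the same constant field $k_L$ and isomorphic Galois group over the base, compatibly with the projection to $\gal(k_L/k)$. Since $\mu_V$ is defined purely in terms of this Galois-theoretic data, the measure is visibly preserved. The advantage of the paper's approach is that it is self-contained and uniform in $e$: it does not pass through the preceding proposition (which is stated only for $e>1$, forcing you to invoke Corollary~\ref{cor1} separately for $e=1$, as you note). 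Your approach has the virtue of reusing machinery already in place, at the cost of that small case split.
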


\begin{proof}The rational map $f: V \to W$ map defines a
$k$-automorphism of $k(V)= k(W)$. As $k(V)$ is regular over $k$, $f$ 
extends to a $k^{alg}$-automorphism $\tilde f$ of $k(V)^{alg}$; then
$\tilde f$ induces a continuous automorphism of $\gal(k(V))$, which
induces the identity on $\gal(k)$. I.e., if $L$ is a finite Galois
extension of $k(V)$, then $\tilde f(L)$ is a finite Galois extension of
$k(W)$, which intersects $k^{alg}$ in $k_L=k^{alg}\cap L$, and
$\gal(\tilde f(L))/k(W))$ and $\gal(L/k(V))$ are isomorphic, by an
isomorphism which is the identity on $\gal(k_L/k)$.  This implies $\mu_{V}(S) = \mu_{W}(f(S))$.  
\end{proof}

{The following lemma records some closure properties of the family of definably amenable groups in an arbitrary theory.} 

\begin{lem} \label{closure props}
Suppose $G$ and $H$ are are definable groups.
\begin{enumerate}
\item If $H \leq G$ is a definably amenable subgroup of $G$ of finite index and $H$ is definably amenable, then $G$ is definably amenable.
\item If $\pi: G \to H$ is a definable surjective homomorphism with finite kernel and $H$ is definably amenable, then $G$ is definably amenable.
\end{enumerate}
\end{lem}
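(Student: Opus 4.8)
The plan is to prove both parts by exhibiting the required left-invariant finitely additive probability measure on $G$ explicitly, built from the given one on $H$. In both cases this is the standard observation that (definable) amenability passes to extensions by finite groups; the only thing to watch is that the auxiliary sets one uses remain definable and the sums involved remain finite.

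For (1), fix a left-translation-invariant finitely additive probability measure $\nu$ on the definable subsets of $H$ and pick representatives $g_1,\dots,g_n$ for the finitely many left cosets $g_iH$ of $H$ in $G$. For definable $X\subseteq G$ define
$$\mu(X)=\frac1n\sum_{i=1}^n\nu\bigl(g_i^{-1}X\cap H\bigr).$$
First I would note that each $g_i^{-1}X\cap H$ is a definable subset of $H$, so that $\mu$ is well defined, and that finite additivity, non-negativity, and $\mu(G)=1$ (using $g_i^{-1}G\cap H=H$) are immediate. For left $G$-invariance, given $g\in G$ I would use that left multiplication by $g^{-1}$ permutes the cosets $g_iH$: writing $g^{-1}g_i=g_{\tau(i)}h_i$ with $\tau$ a permutation of $\{1,\dots,n\}$ and $h_i\in H$, one gets $g_i^{-1}gX\cap H=h_i^{-1}\bigl(g_{\tau(i)}^{-1}X\cap H\bigr)$, so $\nu(g_i^{-1}gX\cap H)=\nu(g_{\tau(i)}^{-1}X\cap H)$ by left invariance of $\nu$; summing over $i$ and reindexing along $\tau$ yields $\mu(gX)=\mu(X)$.

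For (2), put $N=\ker\pi$ and $m=|N|$, and fix a left-invariant finitely additive probability measure $\nu$ on the definable subsets of $H$. For definable $X\subseteq G$ and $h\in H$ let $c_X(h)=|X\cap\pi^{-1}(h)|\in\{0,1,\dots,m\}$; since $m$ is a fixed finite number, each set $H_j(X)=\{h\in H:c_X(h)\ge j\}$ (for $1\le j\le m$) is definable, and I would set
$$\mu(X)=\frac1m\sum_{j=1}^m\nu\bigl(H_j(X)\bigr).$$
Normalization is clear because $c_G\equiv m$. For left invariance one computes $c_{gX}(h)=c_X(\pi(g)^{-1}h)$, whence $H_j(gX)=\pi(g)\cdot H_j(X)$, and left invariance of $\nu$ gives $\mu(gX)=\mu(X)$. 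Finite additivity reduces, via the identity $c_{X\cup Y}=c_X+c_Y$ for disjoint definable $X,Y$, to the additivity of the assignment $f\mapsto\sum_{j\ge1}\nu(\{f\ge j\})=\sum_{j}j\,\nu(\{f=j\})$ on non-negative integer-valued definable functions on $H$ that are bounded by $m$; this in turn is a short computation using that the level sets $\{f=a\}$ (for $0\le a\le m$) partition $H$.

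The only step that needs any care is the verification of finite additivity in part (2) — i.e.\ that the ``$\nu$-integral'' of a bounded integer-valued definable function is additive over disjoint unions even though $\nu$ is merely finitely additive — but this is a routine rearrangement of finite sums and presents no real obstacle. Apart from that, both parts are direct computations; the role of the hypotheses (finite index in (1), finiteness of the kernel in (2)) is exactly to keep all the auxiliary sets definable and all the sums finite.
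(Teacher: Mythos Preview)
Your proof is correct and follows essentially the same construction as the paper: the same averaging-over-cosets formula in (1) (your $g_i^{-1}X\cap H$ is exactly the paper's $g_i^{-1}(X\cap g_iH)$), and the same fiber-counting measure in (2), where your layer-cake expression $\tfrac1m\sum_j\nu(\{c_X\ge j\})$ is simply a rewriting of the paper's $\tfrac1n\sum_i i\,\mu_H(\pi((X)_i))$. You supply more detail on the invariance and additivity checks than the paper does, but the approach is identical.
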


\prf 
(1) Let $n = [G: H]$ and let $g_{1}H, \ldots, g_{n}H$ list the left cosets of $H$ in $G$ with $g_{1} = e$.  Let $\mu_{H}$ denote the measure on $H$.  For a definable subset $X \subseteq G$, define $\mu_{G}$ by 
$$
\mu_{G}(X)  = \frac{1}{n}\sum_{i = 1}^{n} \mu_{H}(g_{i}^{-1}(X \cap g_{i}H)).
$$
It is clear that $\mu_{G}(G) = 1$, $\mu_{G}$ is $G$-invariant, and the finite additivity of $\mu_{G}$ follows from that of $\mu_{H}$.  

\noindent
(2) Let $\mu_{H}$ denote the measure witnessing the definable amenability of $H$ and set $n = |\Ker(\pi)|$.  Given a definable set $X \subseteq G$, define, for each $1 \leq i \leq n$, 
$$
(X)_{i} = \{x \in X : |X \cap \pi^{-1}(\pi(x))| = i\}.
$$
The $(X)_{i}$'s form a definable partition of $X$. We define $\mu_{G}$ by setting 
$$
\mu_{G}(X) =\frac{1}{n} \sum_{i = 1}^{n} i \mu_{H}(\pi((X)_{i})).
$$
This is finitely additive because $\mu_{H}$ is, and since $(G)_{n} = G$,
we have $\mu_{G}(G) = \frac{1}{n} (n \mu_{H}(H)) = 1$.  Invariance
follows from the left transitivity of $\mu_{H}$ and the fact that
$(gX)_{i} = g(X)_{i}$ for all $1 \leq i \leq n$. \qed

\begin{thm} \vlabel{thm2} Let $e\geq 1$, let $k$ be a perfect $e$-free PAC field, and
let $G$ be a group definable in $k$. Then $G$ is definably amenable. 
\end{thm}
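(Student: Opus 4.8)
The plan is to reduce the general case to the case of algebraic groups, for which the measure-theoretic work has already been done. First I would invoke the structure theorem of Hrushovski and Pillay from \cite{HP1}: every group $G$ definable in a perfect bounded (in particular $e$-free) PAC field is, up to a definable finite-index subgroup and a definable finite central (more precisely, finite normal) quotient, isogenous to the group of $k$-rational points of an algebraic group defined over $k$. More precisely, there is a definable finite-index subgroup $G_0 \leq G$ and a definable surjective homomorphism $G_0 \to H(k)$ with finite kernel, where $H$ is an algebraic group over $k$ (one may also need to pass to a finite-index subgroup or finite quotient on the algebraic side, but this does not affect the argument). By Lemma \ref{closure props}, parts (1) and (2), definable amenability passes up along finite-index inclusions and along surjections with finite kernel, so it suffices to prove that $H(k)$ is definably amenable for $H$ an algebraic group defined over $k$.

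Next I would handle $H(k)$ using the measure $\mu$ constructed in Section 3. The key input is Proposition \ref{rem1}: the measure $\mu_V$ on definable subsets of a variety $V \in {\rm Var}_k$ is invariant under birational automorphisms of $V$. Now $H$ is a (not necessarily irreducible) algebraic group, but its identity component $H^\circ$ is an absolutely irreducible variety defined over $k$ — after a finite separable base extension, but since $k$ is PAC and the component group is finite, one can argue that $H^\circ$ is already defined over $k$ and absolutely irreducible, or else first pass to $H^\circ(k)$, which is a finite-index subgroup of $H(k)$, and again use Lemma \ref{closure props}(1). So we may assume $H = H^\circ$ is connected, hence $H \in {\rm Var}_k$. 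Left translation $\lambda_g : x \mapsto gx$ by an element $g \in H(k)$ is a birational (in fact biregular) automorphism of the variety $H$, so by Proposition \ref{rem1} the measure $\mu_H$ satisfies $\mu_H(gS) = \mu_H(S)$ for every definable $S \subseteq H(k)$ and every $g \in H(k)$. Together with the facts that $\mu_H(H) = 1$ and $\mu_H$ is finitely additive (both built into the construction in Section 3), this exhibits $\mu_H$ as a left-invariant finitely additive probability measure on definable subsets of $H(k)$, so $H(k)$ is definably amenable.

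Assembling these: $H^\circ(k)$ is definably amenable by the previous paragraph; hence $H(k)$ is definably amenable by Lemma \ref{closure props}(1); hence $G_0$ is definably amenable by Lemma \ref{closure props}(2); hence $G$ is definably amenable by Lemma \ref{closure props}(1). This completes the argument modulo the precise bookkeeping in applying \cite{HP1}.

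I expect the main obstacle to be the interface with the Hrushovski–Pillay theorem: making sure that the "virtually isogenous to an algebraic group" statement is quoted in exactly the form that the two parts of Lemma \ref{closure props} can absorb — i.e., that all the intermediate groups appearing (finite-index subgroups, finite kernels, finite quotients, the rational points of $H$ versus those of $H^\circ$) are themselves \emph{definable} in $k$, so that the notion of definable amenability applies at each stage, and that one is not silently using the measure on a non-absolutely-irreducible variety, where $\mu_V$ has not been defined. A secondary point requiring care is the passage from $H$ to its connected component over the possibly-imperfect issue — but $k$ is perfect, so $H^\circ$ is defined over $k$ and this is routine. Everything else is a direct application of results already established in the excerpt.
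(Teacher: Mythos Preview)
Your proposal is correct and follows essentially the same route as the paper: invoke Theorem~C of \cite{HP1} to reduce to the $k$-points of a connected algebraic group, use Proposition~\ref{rem1} (translation is a birational automorphism) to see that $\mu_H$ is left-invariant and hence $H(k)$ is definably amenable, and then climb back to $G$ via Lemma~\ref{closure props}. The only cosmetic difference is that the paper quotes Theorem~C with $H$ already connected and with $\pi(G_0)$ of finite index in $H(k)$ (rather than surjective), so the paper tacitly also uses the easy converse direction that a finite-index definable subgroup of a definably amenable group is definably amenable; your hedging about ``finite-index subgroup or finite quotient on the algebraic side'' covers exactly this point.
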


\prf By Theorem C in \cite{HP1}, there is a definable subgroup $G_0$ of
finite index in $G$, and a definable homomorphism $\pi:G_0\to H(k)$, where
$H$ is a (connected) algebraic group, with $\Ker(\pi)$ finite, and
$\pi(G_0)$ Zariski dense in $H$. By Remark \ref{rem1}, the measure $\mu_H$ is stable under
translation, so $H(k)$ is definably amenable (via $\mu_H$). As $\pi(G)$ is definable and has finite
index in $H(k)$, so is $\pi(G)$. The conclusion then follows by Lemma \ref{closure props}.\qed  
\section{Extension of the measure to other fields}

\subsection{$\omega$-free perfect PAC fields}
\para {\bf Definition and useful facts}.  Recall that a
field is $\omega$-free if it has an elementary substructure with Galois
group isomorphic to $\hat F_\omega$, the free profinite group on
countably many generators. Equivalently, if every finite embedding
problem of its absolute Galois group has a solution, see \S3 in \cite{Jw}, or Chapters 25, 27 in
\cite{FJ}.\\[0.05in] 
The theory of perfect $\omega$-free PAC fields can be viewed as the
limit of the theories of perfect $e$-free PAC fields, see also Lemma
\ref{ultraproduct} below. Fact \ref{fact1}
and its Corollary \ref{types} both generalise to $\omega$-free PAC fields,
see \S4 in \cite{Jw}. 

\begin{lem}\vlabel{ultraproduct}
Suppose $K$ is an $\omega$-free perfect PAC field.  Then there is a collection
of PAC fields $(K_{e})_{e \in \mathbb{N}}$ and a non-principal
ultrafilter $\mathcal{U}$ on $\mathbb{N}$ such that each $K_{e}$ is a
PAC field with free absolute Galois group of  rank $e$ and $K \equiv
\prod_{e \in \mathbb{N}} K_{e} / \mathcal{U}$.  If $K$ is countable,
then $K$ embeds elementarily in $\prod_{e \in \mathbb{N}} K_{e} /
\mathcal{U}$. 
\end{lem}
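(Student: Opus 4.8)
The plan is to exhibit $\mathrm{Th}(K)$ as a limit of theories of perfect $e$-free PAC fields, read off the elementary equivalence from \L o\'s's theorem, and then obtain the elementary embedding in the countable case from the $\aleph_1$-saturation of countable ultraproducts. Write $F$ for the prime field. By the $\omega$-free analogue of Fact \ref{fact1} (see \S4 of \cite{Jw}), $\mathrm{Th}(K)$ is complete; and it is axiomatised by ``perfect, PAC, of characteristic $\ch(K)$'', together with the scheme asserting that every finite embedding problem for the absolute Galois group is solvable --- this scheme is exactly $\omega$-freeness, and by the analysis of Cherlin, van den Dries and Macintyre in \cite{CDM81} each of its instances is expressed by a single $\call$-sentence in the field --- together with all the test sentences over $F$ that hold in $K$. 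By \L o\'s's theorem it therefore suffices to prove: \emph{for every sentence $\psi\in\mathrm{Th}(K)$ there is, for every sufficiently large $e$, a perfect $e$-free PAC field satisfying $\psi$}. Granting this, a routine diagonalisation over an enumeration $\mathrm{Th}(K)=\{\psi_k\}_{k\geq 1}$ --- taking $K_e$ to satisfy $\psi_1\wedge\dots\wedge\psi_{j(e)}$ with $j(e)\to\infty$ --- produces a sequence $(K_e)_{e\in\mathbb N}$ of perfect $e$-free PAC fields in which each $\psi_k$ holds for all but finitely many $e$; hence $\prod_e K_e/\calu\models\mathrm{Th}(K)$ for every non-principal ultrafilter $\calu$, so $\prod_e K_e/\calu\equiv K$ by completeness. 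Finally, $\prod_e K_e/\calu$ is an ultraproduct over a countable index set along a non-principal --- hence countably incomplete --- ultrafilter, so it is $\aleph_1$-saturated; when $K$ is countable, being elementarily equivalent to this $\aleph_1$-saturated field, $K$ embeds elementarily into it.

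It remains to prove the italicised claim. By the description of the elementary theory of perfect PAC fields (\cite{CDM81}; see also \cite[Ch.~20]{FJ}) and compactness, $\psi$ is a consequence, modulo the theory of perfect PAC fields of characteristic $\ch(K)$, of two kinds of statement about $K$: finitely many test sentences over $F$ --- equivalently, the single statement $\calk\cap L_\psi\simeq_F K\cap L_\psi$ for a suitable finite Galois extension $L_\psi$ of $F$ --- and the solvability in $\calk$ of finitely many finite embedding problems for $\gal(\calk)$, the finite groups occurring in which are bounded in terms of $\psi$ alone. Now fix $e$ large; I build a perfect $e$-free PAC field $K_e$ realising both. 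Since $\gal(K_e)\cong\hat F_e$, once $e$ exceeds the number of generators of each group occurring in those embedding problems, Gasch\"utz's Lemma (cf. Fact \ref{Gaschutz}) shows that $K_e$ solves all of them --- just as $K$ does, being $\omega$-free. For the algebraic datum, set $K_1=K\cap L_\psi$, a finite extension of $F$, and run the Jarden--Kiehne construction (see \ref{JK}) over the countable Hilbertian field $K_1(t)$, with $t$ an indeterminate: for Haar-almost-all $\bar\si\in\gal(K_1(t))^e$ the field $\Fix(\bar\si)$ is $e$-free PAC, whereas the condition that the restriction of $\bar\si$ to $L_\psi$ generate $\gal(L_\psi/K_1)$ --- meaningful once $e$ is at least the number of generators of $\gal(L_\psi/K_1)$ --- defines a non-empty open, hence positive-measure, subset of $\gal(K_1(t))^e$; any $\bar\si$ in the intersection of these two sets has $\Fix(\bar\si)\cap L_\psi=K_1$. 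Letting $K_e$ be $\Fix(\bar\si)$, or its perfect hull if $\ch(K)=p$ --- which alters neither the absolute Galois group, nor the field's intersection with $F^{alg}$, nor the PAC property --- yields a perfect $e$-free PAC field agreeing with $K$ on all the data on which $\psi$ depends, so $K_e\models\psi$.

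The step I expect to be the main obstacle is the decomposition of $\psi$ at the start of the second paragraph: extracting from the description in \cite{CDM81} (and \cite[Ch.~20]{FJ}) the precise statement that, modulo perfect PAC of a fixed characteristic, $\psi$ is governed by one finite algebraic datum together with the solvability of finitely many embedding problems whose complexity is bounded in terms of $\psi$, and then checking that $\hat F_e$ dispatches those bounded embedding problems uniformly for all large $e$, exactly as the absolute Galois group of the $\omega$-free $K$ does. One must also keep track of the routine characteristic-$p$ bookkeeping --- separability of finite extensions of $\mathbb F_p$, and invariance of the PAC property and of the absolute Galois group under passing to the perfect hull --- and of the fact that $\omega$-freeness is preserved under elementary equivalence, which is what legitimises the reduction in the first paragraph.
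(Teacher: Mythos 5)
Your proposal is correct, but it implements the ultraproduct strategy by a different mechanism than the paper. The paper quotes Jarden's Theorem~4.2 (Th$(K)$ is determined, within perfect $\omega$-free PAC fields, by the isomorphism type of the absolute numbers), then controls the \emph{entire} field of absolute numbers of each $K_e$: in characteristic $p$ it takes $K_e$ with exactly the same absolute numbers as $K$ (possible since $\gal(K\cap F^{alg})$ is procyclic there), and in characteristic $0$ it takes $K_e$ with absolute numbers $\fix(\si_0,\dots,\si_e)$ for a generating sequence of $\gal(K\cap \rat^{alg})$; it then identifies the ultraproduct directly as a perfect PAC field with absolute numbers $K\cap F^{alg}$ and absolute Galois group free of infinite rank (citing \cite[Thm 25.2.3]{FJ}), hence $\omega$-free and elementarily equivalent to $K$. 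You instead argue syntactically: the same invariants theorem gives the axiomatisation of Th$(K)$ by perfect $+$ PAC $+$ characteristic $+$ the embedding-problem scheme $+$ the test sentences of $K$; compactness reduces each $\psi$ to a finite fragment; for large $e$ the embedding-problem instances hold in any $e$-free field by Gasch\"utz/Iwasawa, and the finite algebraic datum $\calk\cap L_\psi\simeq_F K\cap L_\psi$ is arranged by the Jarden--Kiehne almost-all theorem over the Hilbertian field $(K\cap L_\psi)(t)$ (plus passage to the perfect hull in characteristic $p$); a diagonalisation and \L o\'s then give $\prod K_e/\calu\equiv K$, and $\aleph_1$-saturation gives the embedding, exactly as in the paper. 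What your route buys: you never need an existence theorem for $e$-free PAC fields with a \emph{prescribed infinite} algebraic extension of the prime field as absolute numbers, nor \cite[Thm 25.2.3]{FJ}, and the characteristic $0$/$p$ case split disappears; the cost is that you must make the first-order expressibility of the $\omega$-free scheme explicit (which the paper also uses, but only implicitly through Jarden's axiomatisation) and run the sentence-by-sentence bookkeeping. One remark on the step you flag as the main obstacle: no separate extraction from \cite{CDM81} is needed --- the decomposition of $\psi$ in your second paragraph is exactly what compactness gives from the axiomatisation stated in your first paragraph, so once that axiomatisation (Jarden, \S4 of \cite{Jw}) and the elementarity of each embedding-problem instance are accepted as citations, your argument is complete.
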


\begin{proof}
We know that ${\rm Th}(K)$ is axiomatised within the class of perfect
$\omega$-free PAC fields by a description of the isomorphism type of its
absolute numbers (see Thm~4.2, \cite{Jw}). If the characteristic is positive, then the absolute Galois
group of the field of absolute numbers of $K$ is procyclic, and therefore there
is an $e$-free perfect PAC field $K_e$ with the same absolute numbers as
$K$ for every $e\geq 1$. \\
If the characteristic is $0$, however, it may be that the absolute
Galois group of the field of absolute numbers of $K$ is not finitely
generated. Let $\si_j,j\in\nat$, be a set of topological generators of
$\gal(K\cap \rat^{alg})$, and for each $e\geq 1$,
choose an $e$-free perfect PAC field $K_e$, satisfying $K_e\cap
\rat^{alg}=\fix(\si_0,\ldots,\si_e)$.

If $K^*$ is a non-principal ultraproduct of the $K_e$, then $K^*$ is
perfect 
PAC, with field of absolute numbers equal to $K\cap \rat^{alg}$, and
absolute Galois group free on infinitely many generators (\cite{FJ}, Thm
25.2.3), hence is $\omega$-free.

The last assertion follows from the fact that $\prod_{e \in \mathbb{N}}
K_{e} / \mathcal{U}$ is $\aleph_1$-saturated when $\calu$ is non-principal. 
\end{proof}

\para 
In \cite{Jw}, Jarden
introduces a measure on $\calg=\bigudot_{e\geq 1} \gal(K)^e$, for $K$ a
 field, as follows. First, one defines a topology on
$\calg$ by stating that a subset $A$ of $\calg$ is open if $A_e=A\cap
\gal(K)^e$ is open for all $e$. Then $\calg$ is Hausdorff, locally compact,
and totally disconnected. A closed subset $C$ is compact if and only if
it is bounded, i.e., $C_e=\emptyset$ for $e\gg 0$, and $\gal(K)$ acts
continuously on $\calg$ from the left and the right. A subset $A$ is
measurable if $A_e$ is measurable for all $e$ (for the Haar measure $\mu_e$ on
$\gal(K)^e$). If $A$ is measurable, then one defines
$$\mu(A)=\sum_{e=1}^\infty \mu_e(A_e).$$
Then $\mu$ is a complete regular Borel measure of $\calg$ and is invariant
under the action of $\gal(K)$. Note that many sets have infinite measure,
but some do not. One result we will use is the following:

\begin{fact}\vlabel{lemJ} (Lemma 7.2 in \cite{Jw}). Let $K$ be a countable Hilbertian
field, and $\theta$ an $\call(K)$-elementary statement of one variable
(i.e., a Boolean combinations of $\call(K)$-sentences of the form
$[\exists x\, f(x)=0]$, where $f\in K[X]$).
\begin{enumerate}
\item[(a)]
There are integers $n,r>0$
and {$1\leq n_1,\ldots,n_r\leq n$, $\varepsilon_1,\ldots,\varepsilon_r\in
\{\pm 1\}$} such
that $$\mu_e(\theta)=\sum_{i=1}^r {\varepsilon_i}\Bigl(\frac{n_i}{n}\Bigr)^e$$
for every $e>0$.
\item[(b)] If $K\not\models \theta$, then $n_i<n$ for
$i=1,\ldots,r$. Hence $\lim_{e\rightarrow
  +\infty} \mu_e(\theta)=0$, and $\mu(\theta)\in\rat$.
\item[(c)] If $K\models \theta$, then $\lim_{e\rightarrow
  +\infty}\mu_e(\theta)=1$, and $\mu(\theta)=\infty$. 
\end{enumerate}
\end{fact}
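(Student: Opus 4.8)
The plan is to reduce the computation of $\mu_e(\theta)$ to a group-theoretic count inside one finite quotient of $\gal(K)$, evaluate that count with Hall's formula for the Eulerian function of a finite group, and then read off (a)--(c) from the resulting closed form.

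First I would fix a finite Galois extension $L$ of $K$ such that, for every $\bar\si\in\gal(K)^e$, whether $\fix(\bar\si)\models\theta$ depends only on the subgroup $H:=\langle\bar\si\rest_L\rangle$ of $G:=\gal(L/K)$; in fact $\fix(\bar\si)\models\theta$ is then equivalent to $\fix(H)\models\theta$, since $\fix(\bar\si)\cap L=\fix(H)$ and every root of every $f$ occurring in $\theta$ lies in $L$. Such an $L$ exists because only finitely many polynomials occur in $\theta$; enlarging $L$ I may also assume $n:=[L:K]\geq 2$, using that a Hilbertian field is not separably closed. Put $\calh=\{H\leq G:\fix(H)\models\theta\}$, and for a finite group $H$ let $g_e(H)$ be the number of generating $e$-tuples of $H$. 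Since restriction $\gal(K)^e\to G^e$ carries the Haar measure to the uniform measure on $G^e$,
$$\mu_e(\theta)=\frac{1}{n^e}\sum_{H\in\calh}g_e(H).$$

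Next I would apply Hall's identity $g_e(H)=\sum_{H'\leq H}\nu(H',H)\,|H'|^e$, where $\nu$ is the Möbius function of the lattice of subgroups, which presents $g_e(H)$ as an integer combination of the powers $|H'|^e$, $H'\leq H$. Substituting and collecting terms by the order $m=|H'|$ (a divisor of $n$) gives
$$\mu_e(\theta)=\sum_{m\mid n}b_m\Bigl(\frac{m}{n}\Bigr)^e,\qquad b_m\in\zee\ \text{independent of }e,$$
and splitting each $b_m$ into $|b_m|$ terms $\pm 1$ yields the shape claimed in (a). The crucial point is the value of the leading coefficient $b_n$: the only subgroup of $G$ of order $n$ is $G$, and $\nu(G,G)=1$, so $b_n=1$ when $G\in\calh$, i.e. when $\fix(G)=K\models\theta$, and $b_n=0$ otherwise.

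Items (b) and (c) now follow at once. If $K\not\models\theta$, then $b_n=0$, so every $n_i$ in the representation above is $<n$; since $(m/n)^e\to 0$ for $m<n$ we get $\mu_e(\theta)\to 0$, and summing the finitely many geometric series,
$$\mu(\theta)=\sum_{e\geq 1}\mu_e(\theta)=\sum_{\substack{m\mid n\\ m<n}}b_m\,\frac{m}{n-m}\in\rat.$$
If $K\models\theta$, then $b_n=1$, so $\mu_e(\theta)=1+\sum_{m<n}b_m(m/n)^e\to 1$; as the terms of $\sum_{e\geq 1}\mu_e(\theta)$ are nonnegative and tend to $1$, the series diverges and $\mu(\theta)=\infty$.

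The substantive steps are the reduction in the second paragraph --- that the truth of a one-variable statement in a fixed field is governed purely by the subgroup $\bar\si$ generates on a suitable finite quotient --- and the determination of $b_n$, which is precisely what separates (b) from (c). The rest is routine bookkeeping with the Eulerian function and geometric series; the only real nuisance is the degenerate case where $\theta$ fails in $K$ and $\sum_{H\in\calh}g_e(H)$ vanishes identically, which is handled once one has arranged $n\geq 2$.
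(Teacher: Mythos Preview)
The paper does not prove this statement; it is recorded as a Fact with a citation to Jarden's paper \cite{Jw}, so there is no in-paper argument to compare against. Your proof is correct and is essentially the standard argument: reduce to a single finite Galois extension $L/K$ containing the relevant roots, express $\mu_e(\theta)$ as $n^{-e}\sum_{H\in\calh}g_e(H)$, expand each $g_e(H)$ via Hall's M\"obius-inversion formula for the Eulerian function, and then read off the leading coefficient $b_n$ to separate the cases $K\models\theta$ and $K\not\models\theta$. Your treatment of the boundary case (ensuring $n\geq 2$ so that the identically-zero expression can still be written with $r>0$ and all $n_i<n$) is the right patch, and the computations for (b) and (c) are clean.
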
 

\bigskip
Much of what follows can be found in Jarden's paper \cite{Jw} or in \cite{FJ}, at least
implicitly, but we
chose to give proofs.

\begin{lem}\vlabel{lemJreg}. (See also \S 20.7 in \cite{FJ})  Suppose $k_{0}$ is a perfect field and $V \in {\rm Var}_{k_{0}}$. Let $a$ be a generic of $V$ over
$k_0$.  
\begin{enumerate} 
\item Let $L$ be a finite Galois extension of $k_0(a)$. Let $K$ be a
subfield of $L$ containing $k_0(a)$,  which is regular over $k_0$, and
consider the formula $\theta_K$. 
 There are integers $n,r>0$
and $1\leq n_1,\ldots,n_r\leq n$, $\varepsilon_1,\ldots,\varepsilon_r\in
\{\pm 1\}$ such
that if $k$ is an $e$-free perfect PAC which is regular over 
$k_0$, then $$\mu_V(\theta_K)=\sum_{i=1}^r{\varepsilon_i}\Bigl(\frac{n_i}{n}\Bigr)^e.$$
Here $V$ is identified with $V_k$, and $\mu_V$ is the measure defined in
Paragraph 3.2. 
\item Let $S\subset V$ be definable with parameters in $k_0$ by a
formula $\varphi(x)$.  There are integers $n,r,e_1>0$
and $1\leq n_1,\ldots,n_r\leq n$, $\varepsilon_1,\ldots,\varepsilon_r\in
\{\pm 1\}$ such that whenever $k$ is an $e$-free perfect PAC field which
is regular over $k_0$
and $e\geq e_1$, then
$$\mu_V(\varphi)=\sum_{i=1}^r {\varepsilon_i}\Bigl(\frac{n_i}{n}\Bigr)^e.$$
If $\varphi$ is a Boolean combination of test formulas (cf \ref{theta}), then we may take
$e_1=1$.  

\end{enumerate}

\end{lem}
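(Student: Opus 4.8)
The plan is to deduce both parts from Fact \ref{lemJ} by transferring the computation of $\mu_V$ from the field $k$ back to the Hilbertian field $k_0(a)$. For part (1), fix $L/k_0(a)$ finite Galois and $K$ a subfield regular over $k_0$ with $\gal(L/K)$ a suitable image, and recall from the definition of $\mu_V$ in Paragraph 3.2 (second form) that
$$\mu_V(\theta_K)=\frac{|\{\bar\si\in \gal(L/k_0(a))^e\mid \Fix(\bar\si)\simeq_{k_0(a)}K\}|}{|\{\bar\si\in \gal(L/k_0(a))^e\mid \Fix(\bar\si)\cap k_L=k_0\}|}.$$
Both numerator and denominator are, after multiplying by $[L:k_0(a)]^{-e}$, values $\mu_e(\psi)$ of the Jarden--Kiehne measure over $k_0(a)$ at appropriate $\call(k_0(a))$-test sentences $\psi$: the numerator corresponds to the test sentence $\theta_K'$ expressing ``$\Fix(\bar\si)\cap L\simeq_{k_0(a)}K$'' (a Boolean combination of the one-variable statements produced in Remark \ref{theta}), and the denominator to the test sentence expressing ``the relative algebraic closure of $k_0(a)$ in $L$ is $k_0(a)$ itself'', which is $\bigvee$ over the regular subextensions. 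By Fact \ref{lemJ}(a) each of these is of the form $\sum_i \varepsilon_i (n_i/n)^e$ with a \emph{single} common denominator $n=[L:k_0(a)]$ (indeed the raw counts $|\{\bar\si\in\gal(L/k_0(a))^e:\cdots\}|$ are polynomials in... no: they are exactly $\mu_e\cdot[L:k_0(a)]^e$, so are of the stated shape times $n^e$). Dividing, the ratio of two such expressions with the same denominator $n^e$ is again a finite $\zee$-combination of terms $(m/n)^e$ after clearing: more carefully, since the denominator sum is a nonzero polynomial-type expression in $e$, one argues that the quotient, viewed as a function of $e$, agrees for all $e\geq 1$ with a fixed expression $\sum_{i=1}^r\varepsilon_i(n_i/n)^e$ — here one should replace $n$ by a large enough common value (e.g.\ a suitable power, or the least common denominator arising) so that the identity holds. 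The cleanest route is: the denominator equals $\mu_e(\text{"regular"})$, and for $e\geq 1$ this is bounded below by a positive constant times a fixed $(m/n)^e$, while by Proposition \ref{counting2} one has the alternative expression $\mu_V(\theta_K)=\bar\mu(\theta_K')$ computed over $k_L(a)$; applying Fact \ref{lemJ}(a) directly to the single test sentence $\theta_K'$ over the Hilbertian field $k_L(a)$ (note $k_L(a)$ is Hilbertian since $k_0(a)$ is and $k_L/k_0$ is finite) yields immediately $\mu_V(\theta_K)=\sum_i\varepsilon_i(n_i/n)^e$ for all $e>0$, which is part (1).

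For part (2), take $S$ defined by $\varphi(x)$ with parameters in $k_0$. By the definition of the measure (Paragraph 3.2), there is a finite Galois $L/k_0(a)$ and finitely many subfields $K_1,\dots,K_m$ of $L$ (regular over $k_0$, with $\gal(L/K_i)$ an image of $\hat F_e$) such that for $k$ an $e$-free perfect PAC field regular over $k_0$, $a\in S$ iff $\calk\cap L\simeq_{k_0(a)}K_i$ for some $i$ in a set $I$, and $\mu_V(\varphi)=\sum_{i\in I}\mu_V(\theta_{K_i})$. The subtlety is that which subfields $K_i$ have $\gal(L/K_i)$ an image of $\hat F_e$ depends on $e$: a finite group is a quotient of $\hat F_e$ iff it is $e$-generated, so for $e$ large (namely $e\geq e_1$ where $e_1$ is the minimal number of generators of $\gal(L/k_0(a))$, or just $e_1=|\gal(L/k_0(a))|$ to be safe) \emph{all} the relevant subfields qualify, and the index set stabilizes. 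For such $e$, apply part (1) to each $\theta_{K_i}$, $i\in I$, getting $\mu_V(\theta_{K_i})=\sum_j\varepsilon_{ij}(n_{ij}/n_i)^e$; summing finitely many such, and putting everything over a common denominator $n=\mathrm{lcm}(n_i)$ (so $(n_{ij}/n_i)^e=((n_{ij}n/n_i)/n)^e$), yields $\mu_V(\varphi)=\sum_{i=1}^r\varepsilon_i(n_i/n)^e$ for all $e\geq e_1$. Finally, if $\varphi$ is already a Boolean combination of test formulas, then whether $a\models\varphi$ in $\calk$ is determined purely by test sentences and does not require the $e$-free hypothesis beyond $e\geq 1$ (one can take $L$ directly from Remark \ref{theta} with $\gal(L/K_i)$ automatically arising from cyclic-type data); hence the index set $I$ is already stable for all $e\geq 1$ and we may take $e_1=1$.

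The main obstacle is the bookkeeping around the common denominator and, especially in part (2), the $e$-dependence of which subfields $K_i$ are admissible (i.e.\ have $e$-generated Galois group $\gal(L/K_i)$); one must be careful to choose $e_1$ large enough that this set of admissible subfields — and hence the exact formula — no longer changes, and to see that for Boolean combinations of test formulas no such threshold is needed. The passage from the ratio in the definition of $\mu_V$ to a clean $\sum\varepsilon_i(n_i/n)^e$ is best handled by invoking Proposition \ref{counting2} to rewrite $\mu_V(\theta_K)$ as a single Jarden--Kiehne value $\bar\mu(\theta_K')$ over the Hilbertian field $k_L(a)$, thereby applying Fact \ref{lemJ}(a) only to a single test sentence and avoiding any division at all.
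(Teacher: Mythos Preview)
Your approach to part (1) --- reduce $\mu_V(\theta_K)$ to a single Jarden--Kiehne value via Proposition \ref{counting2}, then invoke Fact \ref{lemJ} --- is the paper's approach, but you are missing one step. Proposition \ref{counting2} gives $\mu_V(\theta_K)=\bar\mu_e(\theta'_K)$ with $\bar\mu_e$ the Jarden--Kiehne measure on $\gal(k^s(a))^e$, where $k$ is the ambient $e$-free PAC field; this base depends on $k$ and hence on $e$, so Fact \ref{lemJ} cannot be applied to it uniformly in $e$. The paper inserts the missing transfer: since $k$ is regular over $k_0$ and linearly disjoint from $k_0(a)$ over $k_0$, one has $\gal(k^sL/k^s(a))\simeq\gal(k_0^sL/k_0^s(a))$, so $\bar\mu_e(\theta'_K)$ equals the Jarden--Kiehne value $\mu'_e(\theta'_K)$ computed on $\gal(k_0^s(a))^e$. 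The field $k_0^s(a)$ is fixed, Hilbertian, and independent of $k$ and $e$, and Fact \ref{lemJ} now applies directly. Your phrase ``over the Hilbertian field $k_L(a)$'' is not the correct base (and your abandoned first attempt, dividing two expressions of the form $\sum\varepsilon_i(n_i/n)^e$, genuinely does not work: such quotients are not of this form in general).

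For part (2), your route diverges from the paper's and has a real gap. You posit a single finite Galois $L/k_0(a)$ such that, for \emph{every} $e$-free $\calk$, the truth of $\varphi(a)$ is decided by $\calk\cap L$, and then argue that the index set $I$ stabilises once $e$ is large enough for every subgroup of $\gal(L/k_0(a))$ to be $e$-generated. Neither assertion is justified. The existence of a uniform $L$ independent of $e$ is precisely the issue; and even granting such an $L$, the stability of $I$ does not follow merely from all subfields becoming admissible --- for a fixed $K_i$, whether ``$\calk\cap L\simeq K_i$ forces $\varphi(a)$'' could still depend on $e$ if $L$ is not large enough relative to $\varphi$ in the $e$-free theory. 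The paper handles this by a different mechanism: modulo the theory of perfect $\omega$-free PAC fields regular over $k_0$, the formula $\varphi$ is equivalent to a fixed Boolean combination $\psi$ of test formulas; since the $\omega$-free theory is an ultraproduct of the $e$-free theories, the equivalence $\varphi\leftrightarrow\psi$ transfers to all $e$-free fields with $e\geq e_1$. This simultaneously produces the uniform $L$ and the stable index set, and reduces everything to the test-formula case, which (as you correctly observe) follows from part (1) with $e_1=1$.
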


\prf Let us first show how (2)  follows from (1). The last assertion is  
clear, since such a formula can be written as a disjunction of mutually
incompatible test formulas. (Note however that the statement may be vacuous if $\gal(k_0)$ is not
$e$-generated.) The general case then follows: modulo the theory of
perfect $\omega$-free PAC fields which are regular extensions of $k_0$, we know
that the formula $\varphi$ is equivalent to a disjunction of mutually
incompatible test-formulas. By ultraproduct, the same holds modulo the
theory of perfect $e$-free PAC fields which are regular extensions of $k_0$ for
$e$ sufficiently large. We take $e_1$ be such that the equivalence holds
for all $e\geq e_1$.

\noindent 
(1) Let $K_1,\ldots,K_s$ enumerate all regular extensions of $k_0$ which
are between $k_0(a)$ and $L$. Let $k$ be an $e$-free perfect PAC field
regular over  $k_0$ and linearly disjoint from $k_0(a)$ over $k_0$. As we
saw in Proposition \ref{counting2}, if $K=K_i$,  $$\mu_V(\theta_K)=\bar
\mu_e(\theta_K)=\mu'_e(\theta_K),$$ where $\bar\mu_e$ is the
Jarden-Kiehne measure on $\gal(k^s(a))^e$ and $\mu'_e$ the Jarden-Kiehne
measure  on $\gal(k_0^s(a))^e$. The second equality is because $k$ and
$k_0(a)$ are linearly disjoint over $k_0$, so that $k^s$ and $k_0^sL$
are linearly disjoint (because free) over $k_0^s$, and therefore
$\gal(k^sL/k^s(a))\simeq \gal(k_0^sL/k_0^s(a))$. The result then follows
by Lemma \ref{lemJ}.\qed

\para {\bf A first definition of the measure}. 
Let $k$ be a perfect countable $\omega$-free PAC field, $V \in {\rm Var}_{k}$, and consider, for each $e\geq 1$,
the measure $\mu_{e,V}$ defined above. If $S$ is a definable subset of
$V$, defined by the formula $\varphi(x)$, we then set $\mu'_V(\varphi)=\sum_e\mu_{e,V}(\varphi)$.\\
As in Fact \ref{lemJ}, one of the consequences of 
  Lemma \ref{lemJreg}(2) is that if $k_0(a)\not\models\theta$, then
  $\mu'(\theta)\in\rat$, as we will see below. 
  The
proof goes as follows: if $\theta$ is determined by the finite Galois
extension $L$ of $k_0(a)$, where $a$ is a generic of $V$ and $k_0$ is
the relative algebraic closure in $k$ of the field of definition of $V$, then
$\mu_{e,V}(\theta)=\bar \mu_e(\theta)$, by
\ref{counting2}. Let $n=[k_0^{alg}L:k_0^{alg}(a)]$, $n_i,\varepsilon_i, 1\leq i\leq r$
and $e_1$
be given by Lemma \ref{lemJreg}. Then
$$\mu'_V(\theta)=\sum_{e=1}^{e_1-1}\bar\mu_e(\theta)+\sum_{i=1}^r\sum_{e\geq
                             e_1}\varepsilon_i\Bigl(\frac{n_i}{n}\Bigr)^e.$$
 
We may assume that the sum is reduced, i.e., that for no
$i\neq j$ we have $n_i=n_j$ and $\varepsilon_i\varepsilon_j=-1$. Note
that $\sum_{e=1}^{e_1-1}\bar\mu_e(\theta)\in\rat$, and that if $n_i<n$, then 
$\sum_{e\geq   e_1}\varepsilon_i\Bigl(\frac{n_i}{n}\Bigr)^e
 \in\rat$. However, any $i$ with $n_i=n$ will
  contribute to $+\infty$ (The measure being
  positive, and the sum reduced, if $n_i=n$,
  then $\epsilon_i=1$). 

If
$k_0(a)\models \theta(a)$, then all $\bar\mu_e(\theta)$ are equal to $1$. If
not, then all $n_i$ are $<n$, so that we get
$$\mu'_V(\theta)=(\sum_{e=1}^{e_1-1}\bar\mu_e(\theta))+\sum_{i=1}^r\varepsilon_i\Bigl(\frac{n_i}{n}\Bigr)^{e_1}\Bigl(\frac{n_i}{n-n_i}\Bigr).$$

\begin{cor} Let $k$ be a perfect $\omega$-free PAC field, $V \in {\rm Var}_{k}$, and $\mu'_V$ the measure defined above. Then
$\mu'_V(V)=\infty$. Let $S\subset V$ be definable by a 
formula $\varphi(x)$ over $k$.  If
$k(a)\not\models \varphi(a)$, 
then $\mu'_V(\varphi)\in\rat$. 
\end{cor}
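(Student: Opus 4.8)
The plan is to reduce both assertions to the $e$-free computation encoded in Lemma \ref{lemJreg} together with the explicit closed form for $\mu'_V$ derived in the paragraph preceding the corollary. First I would set up notation: let $k_0$ be the relative algebraic closure in $k$ of the field of definition of $V$ (so $k_0$ is a perfect field and $V\in{\rm Var}_{k_0}$), let $a$ be a generic of $V$, and work modulo the theory of perfect $\omega$-free PAC fields which are regular extensions of $k_0$. For $\mu'_V(V)=\infty$: the formula $\varphi(x)=(x=x)$ is a (trivial) test formula, so Lemma \ref{lemJreg}(2) applies with $e_1=1$ and gives $\mu_{e,V}(V)=\sum_i\varepsilon_i(n_i/n)^e$; but in fact $\mu_{e,V}(V)=1$ for every $e\geq 1$ by construction (the total measure of $V$ is normalised to $1$), so $\mu'_V(V)=\sum_{e\geq 1}1=\infty$. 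Equivalently this is the $n_i=n$ case: $k_0(a)\models(a=a)$, so by Lemma \ref{lemJ}(c)-style reasoning (applied via Lemma \ref{lemJreg}) each $\mu_{e,V}(V)\to 1$ and the series diverges.

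For the rationality statement, suppose $k(a)\not\models\varphi(a)$. I would first decompose $\varphi$ into a finite disjunction of mutually incompatible test formulas: by the quantifier-elimination-type description behind Lemma \ref{lemJreg}(2), modulo the theory of perfect $e$-free PAC regular extensions of $k_0$ (for $e$ large), $\varphi$ is equivalent to $\bigvee_{j} \theta_{K_j}$ for suitable subfields $K_j$ of a finite Galois extension $L$ of $k_0(a)$, the disjuncts being pairwise incompatible on the generics of $V$; since $\mu'_V$ is finitely additive, $\mu'_V(\varphi)=\sum_j \mu'_V(\theta_{K_j})$ up to an error coming from the finitely many small $e<e_1$, where again $\mu_{e,V}(\varphi)\in\rat$ trivially. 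So it suffices to show $\mu'_V(\theta_K)\in\rat$ for each individual test formula $\theta_K$ occurring, under the hypothesis $k(a)\not\models\varphi(a)$; here the key point is that $k(a)\not\models\varphi(a)$ forces $k(a)\not\models\theta_{K_j}(a)$ for each $j$ (the $\theta_{K_j}$ are incompatible and their disjunction is $\varphi$), equivalently $k_0(a)\not\models\theta_{K_j}(a)$ since whether $a$ satisfies a test formula depends only on $k(a)^{alg}\cap L$ and $k$ is regular over $k_0$.

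Now the explicit formula displayed just before the corollary does the work: for a test formula $\theta=\theta_K$ determined by $L$, with $n=[k_0^{alg}L:k_0^{alg}(a)]$ and $n_i,\varepsilon_i,e_1$ as in Lemma \ref{lemJreg} applied to $\theta$ (so $e_1=1$ for a test formula), we have $\mu_{e,V}(\theta)=\bar\mu_e(\theta)$ by Proposition \ref{counting2}, hence
$$\mu'_V(\theta)=\sum_{e\geq 1}\sum_{i=1}^r\varepsilon_i\Bigl(\frac{n_i}{n}\Bigr)^e=\sum_{i=1}^r\varepsilon_i\sum_{e\geq 1}\Bigl(\frac{n_i}{n}\Bigr)^e.$$
Since $k_0(a)\not\models\theta(a)$, Lemma \ref{lemJreg}(1) (the analogue of Lemma \ref{lemJ}(b)) guarantees $n_i<n$ for all $i$; therefore each inner geometric series converges to $\frac{n_i}{n-n_i}\in\rat$, and $\mu'_V(\theta)=\sum_i\varepsilon_i\frac{n_i}{n-n_i}\in\rat$. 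Summing the finitely many $\mu'_V(\theta_{K_j})\in\rat$ and adding the finite rational correction from $e<e_1$ in the general (non-test-formula) case gives $\mu'_V(\varphi)\in\rat$. The only mildly delicate point — and the step I would be most careful about — is the passage from $\varphi$ to its decomposition into incompatible test formulas valid for all large $e$ and the bookkeeping of the finitely many exceptional $e$, but this is exactly what Lemma \ref{lemJreg}(2) provides, so no real obstacle remains.\qed
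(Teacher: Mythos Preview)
Your argument follows the same route as the paper's: invoke Lemma~\ref{lemJreg} to get the closed form $\sum_i\varepsilon_i(n_i/n)^e$ and then sum the geometric series. The paper's own proof is the single sentence ``The result follows by Lemma~\ref{lemJreg},'' relying on the computation in the paragraph immediately preceding the corollary; your write-up is a reasonable elaboration of that.

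There is, however, a genuine gap at the step you call ``the key point.'' You argue that $k(a)\not\models\varphi(a)$ forces $k(a)\not\models\theta_{K_j}(a)$ for each $j$ because ``the $\theta_{K_j}$ are incompatible and their disjunction is $\varphi$.'' But that equivalence is only established modulo the theory of perfect $e$-free PAC fields (for large $e$) regular over $k_0$; the field $k(a)$ is not PAC, so the equivalence need not hold there, and your implication does not follow from the stated reason. Concretely, take $\ch(k)\neq 2$, $\sqrt{-1}\notin k$, $V=\aff^1$, and $\varphi(x)$ the formula $\exists y\,\exists z\,(y^2+z^2=x)$. The affine conic $y^2+z^2=a$ is absolutely irreducible, so every perfect PAC extension of $k(a)$ satisfies $\varphi(a)$; hence $\mu_{e,V}(\varphi)=1$ for all $e$, and the test-formula decomposition of $\varphi$ includes the term $\theta_{K_j}$ with $K_j=k(a)$, so $k(a)\models\theta_{K_j}(a)$ for that $j$. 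Yet a parity-of-degree argument (leading coefficients of $p^2+q^2$ cannot cancel when $\sqrt{-1}\notin k$) shows $a$ is not a sum of two squares in $k(a)$, so $k(a)\not\models\varphi(a)$. Thus the passage from the hypothesis on $\varphi$ to the corresponding hypothesis on each $\theta_{K_j}$ genuinely fails; the argument works cleanly only when $\varphi$ is already (a Boolean combination of) test formulas, which is the situation treated in the paragraph before the corollary. The paper's one-line proof does not address this point either.
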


\prf The result follows by Lemma \ref{lemJreg}.

\begin{example} Consider $V=\gm$, and $S$ the set of squares. Then
$\mu_{e,V}(S)=2^{-e}$, so that $\mu_V(S)=1/2$. We do have that
$k(a)\not\models \exists y\ y^2=a$; and therefore $\mu_V(V\setminus
S)=\infty$, one computes
that it equals $\sum 1^e - 2^{-e}=\infty$. 
\end{example}


\para{\bf Another definition of a measure}.  Let $k$ be a perfect
countable 
$\omega$-free PAC field, $V \in {\rm Var}_{k}$, let $K_e$ be a
family of perfect $e$-free PAC fields  and $\calu$ an ultrafilter such
that $k\prec \prod_eK_e/\calu$ (cf Lemma \ref{ultraproduct}), and let 
 $\mu_V$ be defined as the limit of the measures
$\mu_{V,e}$ along the ultrafilter $\calu$.

\begin{lem}
Then the limit measure $\mu_V$ only takes the values $0$
and $1$, and does not depend on $\calu$.

\end{lem}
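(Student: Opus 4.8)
The plan is to treat one definable set at a time, transfer a decomposition into test formulas to $\calu$-almost every $K_e$ via \L{}o\'{s}'s theorem, observe that the relevant $e$-free measures are ``intrinsic'', and then take asymptotics in $e$.

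Fix a definable $S\subseteq V$, given by an $\call(k)$-formula $\varphi(x)$, and let $k_0$ be the relative algebraic closure in $k$ of the field generated by the parameters of $\varphi$ and a field of definition of $V$; then $k_0$ is finitely generated and perfect, $V\in{\rm Var}_{k_0}$, $\varphi$ is an $\call(k_0)$-formula, and $k$ is a regular extension of $k_0$. Fix a generic $a$ of $V$ over $k$. Write $\mu_{V,e}$ for the $e$-free measure of Section~3 computed in $K_e$; by construction $\mu_V(S)=\lim_\calu\mu_{V,e}(S_e)$, where $S_e$ is the set defined by $\varphi$ in $K_e$ (for $\calu$-almost all $e$, $K_e$ contains a copy of $k_0$, over which $V$ stays absolutely irreducible), and each $\mu_{V,e}(S_e)\in\rat\cap[0,1]$; note $\calu$ is non-principal, as $k$ is $\omega$-free while each $K_e$ is $e$-free. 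By the structure theory of perfect $\omega$-free PAC fields (Corollary \ref{types} and \S4 of \cite{Jw}; cf.\ the proof of Lemma \ref{lemJreg}(2)), modulo ${\rm Th}(k)$ the formula $\varphi$ is equivalent on $V$ to a disjunction $\bigvee_{j\in J}\theta_{K_j}$ of pairwise incompatible test formulas, the $K_j$ being subfields of a single finite Galois extension $L$ of $k_0(a)$, each regular over $k_0$. This equivalence is a single $\call(k_0)$-sentence true in $k$, hence in $\prod_eK_e/\calu$, hence --- by \L{}o\'{s}'s theorem --- in $K_e$ for $\calu$-almost all $e$; for such $e$ the $\theta_{K_j}$ are mutually exclusive off a proper subvariety of $V$, so $\mu_{V,e}(S_e)=\sum_{j\in J}\mu_{V,e}(\theta_{K_j})$.

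The crucial point is that $\mu_{V,e}(\theta_{K_j})$ is \emph{intrinsic}: it depends only on the data $L\supseteq K_j$ and on $e$, not on the particular field $K_e$. Indeed, running the computation in the proof of Lemma \ref{lemJreg}(1): let $a_e$ be a generic of $V$ over $K_e$ and $L^{(e)}\supseteq K_j^{(e)}$ the copies over $k_0^{(e)}(a_e)$ of $L\supseteq K_j$; by Proposition \ref{counting2}, $\mu_{V,e}(\theta_{K_j})$ is a Haar measure on $\gal(K_e^s(a_e))^e$ read off from the finite Galois group $\gal(K_e^sL^{(e)}/K_e^s(a_e))$ and the subgroup attached to $K_j$. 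Since $a_e$ is transcendental over $K_e$, the extension $\overline{K_e}(a_e)/\overline{k_0^{(e)}}(a_e)$ is regular; consequently $L^{(e)}\cap\overline{K_e}(a_e)=k'_L(a_e)$ and $K_e^{alg}\cap L^{(e)}=k'_L$, where $k'_L$ denotes the relative algebraic closure of $k_0^{(e)}$ in $L^{(e)}$, so that $\gal(K_e^sL^{(e)}/K_e^s(a_e))$ together with all the relevant subgroups is isomorphic to $\gal(L^{(e)}/k'_L(a_e))$ with its subgroups --- data which, under $k_0\cong k_0^{(e)}$, $a\mapsto a_e$, does not involve $K_e$. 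Hence $\mu_{V,e}(\theta_{K_j})$ is the intrinsic Jarden--Kiehne value $\mu'_e(\theta'_{K_j})$ on $\gal(k_0^s(a))^e$, and, summing over $j\in J$ and invoking Lemma \ref{lemJreg} (equivalently Fact \ref{lemJ}(a)), there are integers $n,r$ and data $1\le n_i\le n$, $\varepsilon_i\in\{\pm1\}$ with $\mu_{V,e}(S_e)=\sum_{i=1}^r\varepsilon_i(n_i/n)^e=:m_e$ for $\calu$-almost all $e$.

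Finally, let $e\to\infty$. Each $m_e$ lies in $[0,1]$, being the value of a probability measure, and $m_e=\sum_{\{i:\,n_i=n\}}\varepsilon_i+\sum_{\{i:\,n_i<n\}}\varepsilon_i(n_i/n)^e$, so $(m_e)$ converges and $\lim_{e\to\infty}m_e=\sum_{\{i:\,n_i=n\}}\varepsilon_i$ is an integer lying in $[0,1]$, hence $0$ or $1$. Since $\mu_{V,e}(S_e)=m_e$ on a set belonging to $\calu$ and $(m_e)$ converges, $\mu_V(S)=\lim_\calu\mu_{V,e}(S_e)=\lim_{e\to\infty}m_e\in\{0,1\}$, a value making no reference to $\calu$; thus $\mu_V$ is $\{0,1\}$-valued and independent of $\calu$. (By Fact \ref{lemJ}(b),(c) this value is $1$ precisely when $\varphi$ lies in the generic type of $V$ over $k$.) The main obstacle is the intrinsicality claim: an arbitrary family $(K_e)$ realising $k$ as an ultraproduct need not consist of regular extensions of $k_0$ --- indeed it usually does not, since $\gal(k_0)$ is typically not topologically finitely generated --- so Lemma \ref{lemJreg} does not literally apply to $K_e$, and what rescues the argument is that $a_e$ is transcendental over $K_e$, which forces the pertinent Galois data to be the intrinsic one. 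The rest is routine bookkeeping with \L{}o\'{s}'s theorem and the asymptotics already isolated in Lemma \ref{lemJreg} and Fact \ref{lemJ}.
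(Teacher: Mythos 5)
Your proposal is correct and follows essentially the same route as the paper: express $\mu_{V,e}$ of the relevant (test) formulas by the closed form $\sum_i\varepsilon_i(n_i/n)^e$ of Lemma \ref{lemJreg}/Fact \ref{lemJ}, note these values lie in $[0,1]$, and conclude that the ordinary limit as $e\to\infty$ exists and is an integer in $[0,1]$, hence $0$ or $1$, so the $\calu$-limit is independent of the (non-principal) ultrafilter. Your additional care — transferring the decomposition into test formulas via \L{}o\'{s} and checking that $\mu_{V,e}(\theta_{K_j})$ computed in $K_e$ equals the intrinsic Jarden--Kiehne value even when $K_e$ is not regular over $k_0$ — is a sound and welcome elaboration of steps the paper leaves implicit, not a different argument.
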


\prf Indeed, we know that $\mu_{V,e}(\theta)=\sum_{i=1}^r
\varepsilon_i\Bigl(\frac{n_i}{[L:k_L(a)]}\Bigr)^e$ for some integers
$r,n_1,\ldots,n_r,\varepsilon_1,\ldots,\varepsilon_r$ and $e\gg 0$, and that $\mu_{V,e}(\theta)\in [0,1]$. Hence,
$$\mu_V(\theta)=\sum_{i=1}^r \lim_{e\rightarrow
  +\infty}\varepsilon_i\Bigl(\frac{n_i}{[L:k_L(a)]}\Bigr)^e.$$  Note that if
$n_i<[L:k_L(a)]$, then it will contribute $0$ to the sum, and if
$n_i=n$, it will contribute $1$. Hence $\mu_V(\theta)\in [0,1]\cap
\zee=\{0,1\}$. \qed 

\begin{rem}
This means that there is a unique {\em generic type} $p_V$ on the variety $V$:
If $a$ realises $p_V$ over $k$, then $a$ is a generic of $V$ over $k$,
and its relative algebraic closure in a model is $k(a)$.
\end{rem}

\begin{prop} Let $k$ be perfect $\omega$-free PAC, $H$ a
connected algebraic
  group defined over $k$, and $G\leq H(k)$ a definable subgroup. Then
  either $G=H(k)$, or $\mu_H(G)=0$, and $[H(k):G]=\infty$.
\end{prop}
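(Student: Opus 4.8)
The plan is to analyze a definable subgroup $G\leq H(k)$ in a perfect $\omega$-free PAC field via the ultralimit measure $\mu_H$ constructed above, and exploit the fact that this measure is $\{0,1\}$-valued together with left-translation invariance. First I would recall from Remark~\ref{rem1} (whose proof uses only birational invariance, valid in the $e$-free case) that the finite-level measures $\mu_{H,e}$ are invariant under the translation action of $H(k)$ on itself; passing to the ultralimit, $\mu_H$ is a left-invariant $\{0,1\}$-valued finitely additive probability measure on definable subsets of $H(k)$. Since $G$ is a subgroup, its left cosets partition $H(k)$ into $[H(k):G]$ pieces all of the same measure; if $[H(k):G]$ were finite and greater than $1$ this would force each coset to have measure $1/[H(k):G]\notin\{0,1\}$, a contradiction. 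Hence either $[H(k):G]=1$, i.e. $G=H(k)$, or $[H(k):G]=\infty$. In the latter case, finitely many disjoint translates of $G$ each have measure $\mu_H(G)$, and finite additivity plus $\mu_H(H(k))=1$ bounds $n\cdot\mu_H(G)\leq 1$ for every $n$, so $\mu_H(G)=0$.

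The one genuine gap in this first pass is the implicit claim that $[H(k):G]<\infty$ already gives $G=H(k)$ — a priori the cardinality-$1/n$ obstruction only rules out $1<[H(k):G]<\infty$, which is exactly what we want, so in fact there is no gap there; the dichotomy ``$G=H(k)$ or $[H(k):G]=\infty$'' is immediate. The substantive point, and what I expect to be the main obstacle, is ensuring that $G$ is \emph{not} already forced to have infinite index for trivial reasons and that the measure-zero conclusion is not vacuous: one must check that $G$ being definable and Zariski dense (the only interesting case, since non-Zariski-dense definable subsets of $H$ have measure $0$ by fiat and a non-Zariski-dense subgroup of infinite index is handled directly) does not obstruct the argument. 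If $G$ is not Zariski dense in $H$, then $\mu_H(G)=0$ by the definition of the measure, and $[H(k):G]=\infty$ because the Zariski closure $\overline{G}$ is a proper algebraic subgroup, so $\overline{G}(k)$ has infinite index in $H(k)$ (as $H$ is connected, hence irreducible, and $k$ is PAC so $H(k)$ is Zariski dense — the coset space $H/\overline{G}$ being a positive-dimensional variety over the PAC field $k$ has infinitely many $k$-points, or one invokes the infinitude directly from $\dim H>\dim\overline{G}$). So we may assume $G$ Zariski dense, apply the cardinality argument above to conclude the dichotomy, and in the infinite-index case conclude $\mu_H(G)=0$.

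Concretely the steps are: (i) record that $\mu_{H,e}$, hence $\mu_H$, is left-$H(k)$-invariant, using Proposition~\ref{rem1}; (ii) record that $\mu_H$ takes only the values $0$ and $1$, by the Lemma immediately preceding this Proposition; (iii) reduce to the case $G$ Zariski dense in $H$, disposing of the non-dense case by the definition of $\mu_H$ on non-Zariski-dense sets together with $\dim\overline{G}<\dim H$ forcing $[H(k):G]=\infty$; (iv) if $[H(k):G]=n<\infty$, pick coset representatives $g_1=e,\dots,g_n$ and use invariance to get $\mu_H(g_iG)=\mu_H(G)$ for all $i$, then finite additivity gives $1=\mu_H(H(k))=n\mu_H(G)$, so $\mu_H(G)=1/n\in\{0,1\}$, forcing $n=1$ and $G=H(k)$; (v) if $[H(k):G]=\infty$, then for every $m$ there are $m$ pairwise disjoint translates of $G$, so $m\mu_H(G)\leq\mu_H(H(k))=1$, whence $\mu_H(G)=0$. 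I expect step (iii) — cleanly arguing the infinite index of a proper definable subgroup — to require the most care, but it follows from standard algebraic-group facts over PAC fields (a proper closed subgroup of a connected algebraic group has infinitely many cosets, realized by $k$-points since $k$ is PAC).
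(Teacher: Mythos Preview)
Your proposal is correct and follows essentially the same approach as the paper: translation invariance of $\mu_H$ together with the fact that $\mu_H$ is $\{0,1\}$-valued forces $\mu_H(G)=[H(k):G]^{-1}\in\{0,1\}$, so a proper $G$ has measure $0$ and infinite index. Your step (iii) is unnecessary, since the coset argument in (iv)--(v) already handles the non-Zariski-dense case (all cosets then have measure $0$, consistent with infinite index); the paper's one-line proof simply writes $\mu_H(G)=[H(k):G]^{-1}<1$ for proper $G$ and concludes.
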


\prf If $G\leq H(k)$ is proper, then $\mu_H(G)=[H(k):G]\inv<1$, so must
equal $0$.

\begin{prop} \label{omega-free definably amenable} Let $G$ be a group definable in a  $\omega$-free perfect PAC field
  $k$. Then $G$ is definably amenable.
\end{prop}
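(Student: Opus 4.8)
The plan is to reduce the $\omega$-free case to the $e$-free case via the ultraproduct description of Lemma~\ref{ultraproduct}, exactly as was done for algebraic groups in the preceding proposition. Since definable amenability is a property that transfers well along ultraproducts for groups whose definable-set lattice is controlled uniformly, the strategy has two halves: first pull the definable group $G$ back to a uniformly definable family of groups $G_e$ in the approximating $e$-free PAC fields $K_e$, then patch together the measures $\mu_{G_e}$ (each of which exists by Theorem~\ref{thm2}) into a measure on $G$.

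**First I would** fix, by Lemma~\ref{ultraproduct}, a family $(K_e)_{e\in\nat}$ of $e$-free perfect PAC fields and a non-principal ultrafilter $\calu$ with $k\prec \prod_e K_e/\calu$; since $k$ may be taken countable (a measure is being built on definable sets, so we may work in a small model), $k$ embeds elementarily into $K^*:=\prod_e K_e/\calu$. The group $G$ and its multiplication are defined over $k$ by formulas $\chi(x)$ and $\gamma(x,y,z)$; these same formulas define a group $G^*$ in $K^*$, and by {\L}o\'s's theorem there is a set $I\in\calu$ such that for each $e\in I$ the formulas define a group $G_e$ in $K_e$. By Theorem~\ref{thm2}, each $G_e$ ($e\in I$) carries a left-invariant finitely additive probability measure $\mu_{G_e}$ on its definable subsets.

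**Then I would** define a measure on $G^*$ by the ultralimit construction: given a definable subset $X\subseteq G^*$, defined by a formula $\psi(x,\bar d)$ with $\bar d\in K^*$, choose representing tuples $\bar d_e\in K_e$, which gives definable subsets $X_e\subseteq G_e$ for $e\in I$, and set $\mu_{G^*}(X)=\lim_{\calu}\mu_{G_e}(X_e)\in[0,1]$. One checks this is well-defined (independent of the choice of representatives, using {\L}o\'s again, since two formula-pairs defining the same subset of $G^*$ do so in $K_e$ for a $\calu$-large set of $e$), finitely additive, $G^*$-translation invariant, and total mass $1$ --- each of these passes through the ultralimit from the corresponding property of the $\mu_{G_e}$, since the relevant identities are first-order and hold for almost all $e$. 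Restricting $\mu_{G^*}$ to the definable subsets of $G$ (using $k\prec K^*$, so that definable subsets of $G$ are exactly the traces of $\emptyset$-over-$k$-definable subsets of $G^*$) yields a left-invariant finitely additive probability measure on $G$, witnessing its definable amenability.

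**The main obstacle** I expect is the bookkeeping around uniformity: one must ensure that "definable subset of $G$" transfers cleanly, i.e.\ that a measure on the definable subsets of $G^*$ defined with parameters in $K^*$ genuinely restricts to a measure on $G$'s definable sets, and that the ultralimit does not secretly depend on the chosen elementary embedding $k\prec K^*$ (it may, but that is harmless --- any one choice gives a witnessing measure, which is all that is required). This is exactly the phenomenon flagged in the introduction, where the authors note that on an algebraic group the ultralimit measure is $0$--$1$; the point here is simply that existence, not canonicity, is what definable amenability demands, so the dependence on $\calu$ is not an obstacle to the statement. A cleaner alternative, which I would mention, is to bypass the construction entirely: by Theorem~C of \cite{HP1} (which applies to $\omega$-free PAC fields as well, these being bounded perfect PAC), $G$ has a finite-index definable subgroup mapping onto a Zariski-dense definable subgroup of $H(k)$ for some connected algebraic $H$, with finite kernel; by Remark~\ref{rem1} (Proposition~\ref{rem1}) $\mu_H$ is translation-invariant so $H(k)$ is definably amenable, and Lemma~\ref{closure props} finishes --- this is literally the proof of Theorem~\ref{thm2} with "$e$-free" replaced by "$\omega$-free" throughout.
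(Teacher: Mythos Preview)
Your main argument is correct and is exactly the paper's approach: embed $k$ elementarily into an ultraproduct of $e$-free perfect PAC fields via Lemma~\ref{ultraproduct}, note that each $G_e$ is definably amenable by Theorem~\ref{thm2}, and take the ultralimit measure. The paper states this in two lines; you have simply unpacked the bookkeeping behind ``hence so is $G$'', which is fine.

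Your proposed \emph{alternative}, however, contains a genuine error. You claim that $\omega$-free PAC fields are bounded perfect PAC, so that Theorem~C of \cite{HP1} applies directly. This is false: a perfect PAC field is bounded precisely when its absolute Galois group is small (finitely many open subgroups of each finite index), and $\hat F_\omega$ is not small --- it has infinitely many open subgroups of every index $\geq 2$. So the Hrushovski--Pillay structure theorem for definable groups is not available over an $\omega$-free PAC field, and one cannot simply rerun the proof of Theorem~\ref{thm2}. This is exactly why the paper routes through the ultraproduct: the structure theorem is applied inside each bounded $K_e$, and only the conclusion (definable amenability) is transported back to $k$ via the ultralimit. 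Drop the alternative paragraph, or rephrase it to note that this shortcut is \emph{not} available, which is precisely the point of the ultraproduct detour.
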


\prf By Lemma \ref{ultraproduct}, $G$ embeds elementarily in an ultraproduct
$\prod_{e\in\nat}G_e/\calu$, where $G_e$ is definable in the $e$-free perfect
PAC field $K_e$. Each $G_e $ is definably amenable, hence so is
$G$. \qed

\para{\bf Some questions}.\\[0.05in]
$\bullet$ What are the possible sets of values of $\mu_G$? \\
$\bullet$ Can this set be infinite? \\
$\bullet$ Can it contain irrational
numbers? \\
$\bullet$ Does it depend on the ultrafilter $\calu$?

\subsection{Perfect PAC fields with free pro-$p$ absolute Galois group}

\para{\bf Measure on pro-$p$-$e$-free PAC fields.} We consider the theory of
perfect PAC fields, with absolute Galois group free pro-$p$ on $e$
generators \textemdash i.e., the absolute Galois group is the pro-$p$-completion of
$F_e$. Let $k$ be a perfect field with absolute Galois group free
pro-$p$ on $e$ generators, let $V \in {\rm Var}_{k}$, and $a$ a generic of $V$ over $k$. Choose a $p$-Sylow
subgroup $P$  of $\gal(k(a))$, and let
$F=\Fix(P)$.\\
Let $L$ be a finite Galois
extension of $k(a)$, and $K$ a subfield of $L$ containing $k(a)$,
regular over $k$, and with $\gal(L/K)$ an $e$-generated $p$-group. We wish to define
$\mu(\theta_K)$. Since all $p$-Sylow of $\gal(k(a))$ are conjugate by an
element of $\gal(k(a))$, and $\gal(k^sL)$ is a normal subgroup of
$\gal(k(a))$, $P$ projects onto
$\gal(k^sL/K)$. We now set
$$\mu_V(\theta_K)=\frac{|\{\bar\si\in \gal(FL/F)^e\mid
  \Fix(\bar\si)\simeq_FFK\}| }{|\{\bar\si\in \gal(FL/F)^e\mid
  \Fix(\bar\si)\cap k^s=k\}|}.$$
Note that the choice of $P$ only depends on the field $k(a)$ (i.e., on
the variety $V$), and not on the fields $L,K$. We need to show that the
definition of $\mu_V$ does not depend on $L$, i.e., that if we compute it
in some $FM$, with $M$ Galois over $k(a)$ and containing $L$, we will
get the same number. But this follows from the following remarks: \\
Let $\bar\si\in \gal(FL/F)^e$ project onto a set of generators of
$\gal(k_L/k)$; then $\bar\si\rest_{k_L}$ has $\phi(k_M/k_L,k)$ many
extensions to $k_MF$ which project onto a set of generators of
$\gal(k_M/k)$, and therefore $\bar\si$ has $\phi(k_M/k_L,k)$ many
extensions to $k_MFL$ with the same property; since $\gal(FM/FL)$ is a $p$-group, $\bar\si$ has
exactly
$$\phi(k_M/k_L,k)[FM:k_MFL]^e$$
many extensions to $FM$ with fixed field a regular extension of $k$. The result follows.

\begin{cor}
Suppose $k$ is a perfect PAC field whose absolute Galois group is free
pro-$p$. Then  every group definable in $k$ is definably amenable.  
\end{cor}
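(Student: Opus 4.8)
The plan is to mimic the proof of Theorem \ref{thm2}. Write $e$ for the rank of $\gal(k)$; I treat the case $e<\infty$, which is the one for which the measure $\mu_V$ ($V\in{\rm Var}_k$) was constructed above. (If $\gal(k)$ is free pro-$p$ of countably infinite rank, one reduces to this case by the ultraproduct method of Proposition \ref{omega-free definably amenable}, using that such a $k$ is elementarily equivalent to an ultraproduct of perfect PAC fields with finitely generated free pro-$p$ absolute Galois group.) The three ingredients I need are: birational invariance of $\mu_V$, the Hrushovski--Pillay structure theorem, and the closure properties of Lemma \ref{closure props}.

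First I would prove the pro-$p$ analogue of Proposition \ref{rem1}: if $f$ is a birational isomorphism between $V,W\in{\rm Var}_k$ and $S\subseteq V$ is definable, then $\mu_V(S)=\mu_W(f(S))$. As there, $f$ yields a $k$-automorphism of $k(V)=k(W)$ which, since $k(V)$ is regular over $k$, extends to a $k^{alg}$-automorphism $\tilde f$ of $k(V)^{alg}$ inducing a continuous automorphism of $\gal(k(V))=\gal(k(a))$ ($a$ a generic of $V$) that is the identity on $\gal(k)$. The one point specific to the pro-$p$ setting is that $\tilde f$ carries the chosen $p$-Sylow $P$ of $\gal(k(a))$ to another $p$-Sylow $\tilde fP\tilde f\inv$; so I would first record that $\mu_V$ does not depend on the choice of $p$-Sylow. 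Indeed, if $P'=gPg\inv$ with $g\in\gal(k(a))$ and $F'=\Fix(P')=g(F)$, then $g(L)=L$ (as $L/k(a)$ is Galois), $g$ carries $FL$ to $F'L$, and $\bar\si\mapsto g\inv\bar\si g$ is a bijection $\gal(F'L/F')^e\to\gal(FL/F)^e$ preserving both the numerator and the denominator conditions in the defining formula for $\mu_V(\theta_K)$, so the two ratios coincide. Granting this, $\tilde f$ sends a finite Galois extension $L/k(a)$ computing $\theta_K$ to one computing the corresponding formula $\theta_K'$ over $W$, preserves $k_L=k^{alg}\cap L$, and induces an isomorphism $\gal(FL/F)\cong\gal(\tilde f(F)\tilde f(L)/\tilde f(F))$ that is the identity on $\gal(k_L/k)$; hence $\mu_V(\theta_K)=\mu_W(\theta_K')$. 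Writing an arbitrary formula as a disjunction of mutually incompatible test-formulas then gives the claim for all definable $S$.

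As an immediate consequence, for a connected algebraic group $H$ over $k$, left translation by any $g\in H(k)$ is a birational automorphism of $H$, so $\mu_H$ is left-translation invariant and $H(k)$ is definably amenable via $\mu_H$. Next, $k$ being a perfect bounded PAC field ($\gal(k)$ is finitely generated, hence small), I would invoke Theorem C of \cite{HP1} to obtain a definable subgroup $G_0\leq G$ of finite index together with a definable homomorphism $\pi:G_0\to H(k)$, $H$ a connected algebraic group over $k$, with $\Ker\pi$ finite and $\pi(G_0)$ Zariski dense in $H$. Exactly as in the proof of Theorem \ref{thm2}, $\pi(G_0)$ is then a definable subgroup of finite index in $H(k)$, so after normalization $\mu_H$ restricts to a left-invariant probability measure on it; thus $\pi(G_0)$ is definably amenable. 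By Lemma \ref{closure props}(2), $G_0$ is definably amenable, and then by Lemma \ref{closure props}(1), so is $G$.

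The only real work is in the first step: one must check that the count defining $\mu_V$ transforms compatibly under both a change of $p$-Sylow and the function-field automorphism $\tilde f$. Once the independence of $\mu_V$ from the $p$-Sylow is isolated — an elementary consequence of Sylow conjugacy, as above — the remainder of the birational-invariance argument is a transcription of Proposition \ref{rem1}, and the group-theoretic conclusion is verbatim that of Theorem \ref{thm2}.
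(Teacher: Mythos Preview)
Your proposal is correct and follows essentially the same route as the paper: invoke Theorem C of \cite{HP1} in the finite-rank (bounded) case, use translation invariance of $\mu_H$ to get definable amenability of $H(k)$, then apply Lemma \ref{closure props}; handle the infinite-rank case via an ultraproduct as in Proposition \ref{omega-free definably amenable}. The paper is terser and simply asserts that ``$\mu_H$ witnesses the definable amenability of $H(k)$''; your explicit verification of birational invariance in the pro-$p$ setting, together with the observation that one must first check independence of $\mu_V$ from the chosen $p$-Sylow (via Sylow conjugacy), fills in a detail the paper leaves to the reader.
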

\begin{proof}
In the case that the absolute Galois group of $k$ is free pro-$p$ of finite rank $e$, then, as $k$ is a bounded perfect PAC field, Theorem C of \cite{HP1} implies that $G$ is virtually isogenous to the $k$ points of an algebraic group $H$ and we may argue as in Theorem \ref{thm2}, using that the measure $\mu_{H}$ witnesses the definable amenability of $H(k)$.  If the absolute Galois group of $k$ is free pro-$p$ of infinite rank, then $k$ is elementary equivalent to a non-principal ultraproduct of fields $F_{e}$ with $F_{e}$ perfect PAC and pro-$p$-$e$.  We then obtain a translation invariant ultralimit measure as in Proposition \ref{omega-free definably amenable}.  
\end{proof}

\bigskip\noindent
DMA - Ecole Normale Sup\'erieure\\
45, rue d'Ulm\\
75230 Paris Cedex 05\\
France\\
e-mail: {\tt zoe.chatzidakis@ens.fr}\\[0.1in]

\noindent Department of Mathematics - UCLA\\
Math Sciences Building\\
520 Portola Plaza
Box 951555\\
Los Angeles, CA 90095\\
USA\\
e-mail: {\tt nickramsey@math.ucla.edu}

\end{document}